\documentclass[a4paper,10pt]{amsart}

\newfont{\cyr}{wncyr10 scaled 1100}

\usepackage[left=2.7cm,right=2.7cm,top=3.5cm,bottom=3cm]{geometry}
\usepackage{amsthm,amssymb,amsmath,amsfonts,mathrsfs,amscd, graphics}
\usepackage[latin1]{inputenc}
\usepackage[all]{xy}
\usepackage{latexsym}
\usepackage{longtable}
\usepackage{color}
\usepackage{tikz-cd}
\usepackage{hyperref}

\setcounter{tocdepth}{1}
\numberwithin{equation}{section}

\theoremstyle{plain}
\newtheorem{theorem}{Theorem}[section]
\newtheorem*{theorem*}{Theorem}
\newtheorem{corollary}[theorem]{Corollary}
\newtheorem{lemma}[theorem]{Lemma}
\newtheorem{proposition}[theorem]{Proposition}

\newtheorem{axiom}[theorem]{Axiom}
\newtheorem{thm}{Theorem}

\theoremstyle{definition}

\theoremstyle{remark}
\newtheorem{obswr}[theorem]{Observation}
\newtheorem{remarkwr}[theorem]{Remark}
\newtheorem{example}[theorem]{Example}

\newenvironment{remark}{\begin{remarkwr}\begin{upshape}}{\end{upshape}\end{remarkwr}}

\def\Gal{\mathrm{Gal}}

\def\GSp{\mathrm{GSp}}

\def\Sp{\mathrm{Sp}}

\def\supp{\mathrm{supp}}

\DeclareMathOperator{\Adm}{Adm}

\def\FF{\mathbb{F}}

\def\QQ{\mathbb{Q}}
\def\RR{\mathbb{R}}

\def\ZZ{\mathbb{Z}}

\include{thebibliography}

\begin{document}

\title[Deligne-Lusztig varieties and EKOR strata]
{Deligne-Lusztig varieties and basic EKOR strata}

\author{Haining Wang }

\begin{abstract}
Under the axiom of He and Rapoport for the stratifications of Shimura varieties, we explain a result of  G\"{o}rtz, He and Nie that the EKOR strata contained in the basic loci can be described as a disjoint union of Deligne-Lusztig varieties. In the special case of Siegel modular varieties, we  compare their descriptions to that of  G\"{o}rtz and Yu for the supersingular Kottwitz-Rapoport strata and to the descriptions of Harashita, Hoeve for the supersingular Ekedahl-Oort strata.
\end{abstract}

\address{\parbox{\linewidth} {Haining Wang,\\ Department of Mathematics,\\ McGill University,\\ 805 Sherbrooke St W,\\ Montreal, QC H3A 0B9, Canada.~ }}
\email{wanghaining1121@outlook.com}

\subjclass[2010]{Primary 14G35, Secondary 11G18}
\date{\today}

\maketitle

\tableofcontents

\section*{Introduction}
Let $\mathcal{A}_{g}$ be the moduli space of principally polarized abelian varieties over $\bar{\FF}_{p}$. There are two interesting stratifications of this moduli space. The first one is the \emph{Newton stratification} which stratify the space according to the Newton polygons of the abelian varieties. The other one is the \emph{Ekedahl-Oort stratification} which stratify the space according to the isomorphism classes of the $p$-torsion subgroups of the abelian varieties. One of the most interesting strata in the Newton stratification is the basic stratum or the supersingular stratum. It is a classical question how the Ekedahl-Oort stratification interacts with the Newton stratification. In particular, one can ask how the Ekedahl-Oort stratification meets the basic Newton stratum. This question was answered in \cite{Har10} by Harashita where he showed that certain unions of the supersingular Ekedahl-Oort strata are isomorphic to Deligne-Lusztig varieties. This result is refined by Hoeve in \cite{Hoe10} and he showed that each individual supersingular Ekedahl-Oort stratum is isomorphic to a fine Deligne-Lusztig variety. For general Shimura varieties that have good integral model and stratification theory, one can still formulate the same questions. Vollaard and Wedhorn \cite{Vo10, VW11} studied the case of unitary Shimura varieties of signature $(1, n-1)$ at an innert prime and answered this question. Note in this case, the basic Newton stratum is very special in the sense that it can be covered entirely by Ekedahl-Oort strata and again each Ekedahl-Oort stratum is a union of Deligne-Lusztig varieties. Those Shimura varieties that satisfy the similar special properties have been classified by the work of G\"{o}rtz, He and Nie, see \cite{GH15, GHN19}. They termed these special Shimura varieties as the fully Hodge-Newton decomposable Shimura varieties. In fact they not only treated the case of Ekedahl-Oort stratifications on Shimura varieties with good reductions  but also the general cases of Shimura varieties with parahoric level structures. Note in the case of Shimura varieties with parahoric level structures, one has to replace the Ekedahl-Oort strata by the so called EKOR (Ekedahl-Oort-Kottwitz-Rapoport) strata. This notion is introduced by He and Rapoport in \cite{HR17} where they axiomatized the theory of stratifications on general Shimura varieties with parahoric level structures. This notion interpolates the Ekedahl-Oort stratification and the so-called Kottwitz-Rapoport stratification. The EKOR strata have been studied extensively in the recent preprint \cite{SYZ19} in the cases of Hodge type and abelian type Shimura varieties. The Kottwitz-Rapoport stratification was first studied by Ng\^{o} and Genestier in the case of Siegel modular varieties with Iwahori level structure, \cite{GN02}. It can be considered as a stratification by singularities as the semisimple traces of Frobenius on the nearby cycle sheaf is constant on each stratum. In \cite{GY10, GY12}, G\"{o}rtz and Yu studied those Kottwitz-Rapoport strata contained in the basic Newton strata on the Siegel modular varieties with Iwahori level structures. They showed again that those Kottwitz-Rapoport strata can be described as a union of Deligne-Lusztig varieties. Finally, we remark that the appearances of Deligne-Lusztig varieties in the basic Newton strata are not only beautiful in its own right but are also important in arithmetic applications. Notably, they play an important role in the study of arithmetic fundamental lemma \cite{RTZ13, HLZ19} and they are also crucial in establishing geometric versions of Jacquet-Langlands correspondence \cite{HTX17, XZ17}.

The first aim of the present note is to present a result of  G\"{o}rtz, He and Nie that generalizes the aforementioned results of G\"{o}rtz and Yu as well as the results of Harashita and Hoeve. We formulate their results in the setting of Shimura varieties that satisfy the He-Rapoport axiom. Here we only state an informal version and the details can found in the proof of Theorem \ref{Main-1}. 
\begin{thm}[G\"{o}rtz-He-Nie]
Every EKOR stratum that is contained in the basic Newton stratum can be written as a disjoint union of classical Deligne-Lusztig varieties.
\end{thm}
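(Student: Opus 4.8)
The plan is to transport the statement, via the He--Rapoport axioms, onto the group-theoretic side, and there to invoke the structure theory of affine Deligne--Lusztig varieties attached to the basic $\sigma$-conjugacy class; the precise formulation and the details are carried out in Theorem~\ref{Main-1}, so here I only describe the architecture of the argument.

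\textbf{Step 1 (passage to the group side).} Let $(G,\{\mu\},K)$ be the local datum at $p$ with $K$ parahoric, let $\widetilde W$ be the Iwahori--Weyl group, $W_K$ its subgroup attached to $K$, and ${}^K\!\Adm(\{\mu\})\subset\widetilde W$ the finite set indexing the EKOR stratification $\Sh_K=\bigsqcup_w\Sh_{K,w}$ furnished by the axioms. The axioms also give the Newton stratification $\Sh_K=\bigsqcup_{[b]\in B(G,\{\mu\})}\Sh_K^{[b]}$ together with the compatibility $\Sh_{K,w}\subset\Sh_K^{[b]}$, where $[b]$ is the image of $w$ under the straightening map $\widetilde W\to B(G)$. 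I would fix the basic class $[b_0]\in B(G,\{\mu\})$ and invoke the Rapoport--Zink type uniformization of the basic locus (part of the axiomatic package, and established in Hodge and abelian type in \cite{SYZ19}), which gives, compatibly with the EKOR stratifications on both sides,
\[
\Sh_K^{[b_0]}\;\cong\;I_{b_0}(\QQ)\,\big\backslash\,\big(X_K(\{\mu\},b_0)\times G(\AAA_f^p)/K^p\big),
\]
under which the EKOR stratum $\Sh_{K,w}^{[b_0]}$ corresponds to the locally closed subvariety $X_{K,w}(b_0)$ of the parahoric-level affine Deligne--Lusztig variety $X_K(\{\mu\},b_0)$. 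Because $[b_0]$ is basic, $I_{b_0}$ is an inner form of $G$; in particular $I_{b_0}(\QQ)$ acts on the product with finite stabilizers and finitely many orbits over $G(\AAA_f^p)/K^p$, so it is enough to show that every $X_{K,w}(b_0)$ is a finite disjoint union of classical Deligne--Lusztig varieties.

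\textbf{Step 2 (structure of $X_{K,w}(b_0)$).} This is the theorem of G\"{o}rtz--He--Nie, which I would organize as follows. First reduce to Iwahori level: $X_{K,w}(b_0)$ is, up to a finite disjoint union, assembled from the Iwahori-level varieties $X_{w'}(b_0)$ with $w'$ in $W_KwW_K\cap\Adm(\{\mu\})$. Then run the reduction method of He and Nie: a chain of cyclic shifts by simple reflections --- each an isomorphism on the associated affine Deligne--Lusztig varieties --- replaces $w'$ by a minimal length element of its $\sigma$-conjugacy class $\mathcal{O}\subset\widetilde W$. The decisive input is that $\Psi(\mathcal{O})=[b_0]$ is the minimal element of $B(G,\{\mu\})$, so that Mazur's inequality for $\mathcal{O}$ is an equality and $\mathcal{O}$ is \emph{fundamental}: its minimal length elements are $\sigma$-straight. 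For such an element one has He's explicit description --- a standard parahoric $P\supseteq I$ with finite reductive quotient $\bar P$ over $\FF_q$, a $\sigma$-conjugate of $b_0$ normalizing $P$, and an element $\bar w$ of the Weyl group of $\bar P$ --- identifying $X_{w'}(b_0)$ with a disjoint union, indexed by the discrete quotient $J_{b_0}(\QQ_p)/\big(J_{b_0}(\QQ_p)\cap P\big)$, of copies of the classical Deligne--Lusztig variety associated to $\bar w$ in $\bar P$. Feeding this through Step~1 gives the theorem.

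\textbf{The main obstacle.} Essentially all of the difficulty sits in the combinatorial claim of Step~2 --- that membership of $w$ in $\Adm(\{\mu\})$ together with basic-ness of its associated $\sigma$-conjugacy class forces $w$, after cyclic-shift reduction, into the narrow class of elements whose affine Deligne--Lusztig variety degenerates to an honest finite-type Deligne--Lusztig variety, rather than to an iterated fibration in affine spaces over one. This is a delicate interaction between the Bruhat--Tits combinatorics of the admissible set and the Newton stratification of $B(G)$. In the Siegel case with Iwahori level the relevant class of elements can be made completely explicit; this is what makes it possible, in the second part of the note, to match the resulting description against those of G\"{o}rtz--Yu for the Kottwitz--Rapoport strata and of Harashita--Hoeve for the Ekedahl--Oort strata. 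Everything else is formal once the He--Rapoport axioms and the uniformization results of \cite{SYZ19} are granted.
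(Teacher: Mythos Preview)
Your Step~1 is correct and matches the paper (Corollary~\ref{EKOR-DL}): under the uniformization axiom, the basic EKOR stratum $EKOR_{K,w}$ is controlled by the fine affine Deligne--Lusztig variety $X_{K,w}(\tau)$, so the problem becomes purely group-theoretic. Your Step~2, however, diverges from the paper and contains a real gap. The assertion that ``a chain of cyclic shifts by simple reflections --- each an isomorphism on the associated affine Deligne--Lusztig varieties --- replaces $w'$ by a minimal length element'' is not correct: in He's reduction method only the length-preserving shifts $w\mapsto sw\sigma(s)$ induce isomorphisms, while reaching a minimal-length representative in general requires length-\emph{decreasing} steps, and those produce $\mathbb{A}^1$-fibrations, not isomorphisms. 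You flag exactly this in your ``main obstacle'' paragraph, but you never supply the mechanism by which basic-ness suppresses the fibrations; without that, the argument does not close. (A smaller point: the fine variety $X_{K,w}(\tau)$ is the image of the single Iwahori-level $X_w(\tau)$ under $G(\breve{\QQ}_p)/\breve{I}\to G(\breve{\QQ}_p)/\breve{K}$, not a disjoint union over $w'\in W_KwW_K$; you are conflating the fine and coarse parahoric varieties.)

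The paper's argument bypasses the reduction method entirely. The missing ingredient is the criterion of Proposition~\ref{basic-EKOR} (i.e.\ \cite[Proposition~5.6]{GHN19}): $EKOR_{K,w}$ is basic if and only if $W_{\supp_\sigma(w)}$ is \emph{finite}. With this finiteness in hand one applies two structural results of \cite{GH15} directly. First, Theorem~\ref{fine-coarse} rewrites $X_{K,w}(\tau)$ as a coarse variety for the parahoric attached to $I(K,w,\sigma)$, the maximal $\mathrm{Ad}(w)\sigma$-stable subset of $K$. Second, since $W_{\supp_\sigma(w)\cup I(K,w,\sigma)}$ is then also finite (\cite[Proposition~5.7]{GHN19}), Proposition~\ref{affine-classical} shows this coarse variety is already a $J_\tau$-indexed disjoint union of copies of a classical fine Deligne--Lusztig variety $Y_{I(K,w,\sigma)}(w)$ living in the finite reductive quotient of $\breve{K}_{\supp_\sigma(w)\cup I(K,w,\sigma)}$. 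No cyclic shifts, no minimal-length representatives, no $\sigma$-straightness: the finiteness of $\supp_\sigma(w)$ places $w$ inside a finite parahoric quotient from the outset, and the affine picture collapses to the classical one in a single step.
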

This theorem is not stated anywhere in the literature but can be deduced from \cite{GH15, GHN19}. Here we give a quite self-contained presentation of this result.  We also make an effort to bring this result in the frame work of He-Rapoport by introducing an additional axiom incorporating the Langlands-Rapoport conjecture, see axiom \ref{uniformize}. We hope that our presentation will be useful to workers in this trade, especially those who are not so familiar with \cite{GH15, GHN19}. The salient feature of the above theorem is that once the group theoretic data giving rise to the Shimura varieties and their stratifications are at hand, the descriptions of the basic EKOR strata can be made explicit without reference to the Shimura varieties themselves. This is especially valuable when the Shimura variety does not admits a good moduli interpretation. We make the above descriptions of the EKOR strata explicit in the case of Siegel modular varieties both in the case of hyperspecial level and in the case of Iwahori level. In particular this allows us to compare the descriptions of the basic EKOR strata by G\"{o}rtz-He-Nie with those obtained by G\"{o}rtz-Yu  and by Harashita, Hoeve. 
\begin{thm}
The descriptions of the basic EKOR strata of G\"{o}rtz-He-Nie for Siegel modular varieties agree with 
\begin{enumerate}
\item the descriptions of G\"{o}rtz-Yu for the basic KR strata;
\item the decriptions of Hoeve for the basic Ekedahl-Oort strata.
\end{enumerate}
\end{thm}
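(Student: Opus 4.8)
The plan is to make all three descriptions explicit in terms of the combinatorial data attached to $\GSp_{2g}$ over $\QQ_{p}$ and the minuscule cocharacter $\mu=(1^{g},0^{g})$, and then to exhibit index-preserving bijections that match up the corresponding Deligne--Lusztig varieties. First I would specialize Theorem \ref{Main-1} to the Siegel datum $(\GSp_{2g},\gothH_{g}^{\pm})$, once with hyperspecial level --- where the EKOR stratification is the Ekedahl--Oort stratification --- and once with Iwahori level --- where it is the Kottwitz--Rapoport stratification. In both cases Theorem \ref{Main-1} exhibits the basic locus as a disjoint union of classical Deligne--Lusztig varieties, indexed by a double coset space for $J_{b}(\QQ_{p})$ acting on the relevant subset of the admissible set $\Adm(\mu)$, each variety being $X(w)$ for a Coxeter-type element $w$ in the Weyl group of the reductive quotient of a parahoric of $\GSp_{2g}$. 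I would record exactly which reductive groups arise (symplectic groups, general linear groups, and products thereof over $\FF_{p}$), which elements $w$, and which index sets, using the known explicit combinatorics of $\Adm(\mu)$ and of the basic class $[b]$ for $\GSp_{2g}$.

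For part (2) I would then recall Hoeve's theorem \cite{Hoe10}: each supersingular Ekedahl--Oort stratum in $\calA_{g}$ is a fine Deligne--Lusztig variety, indexed by its Ekedahl--Oort type, i.e.\ by an element $w\in{}^{J}\widetilde{W}$ that is $\mu$-admissible and whose associated $\sigma$-conjugacy class is basic. The comparison splits into three checks: (a) the Ekedahl--Oort type of a stratum agrees with the element of ${}^{J}\widetilde{W}$ produced by the He--Rapoport projection in Theorem \ref{Main-1}, which is the compatibility of the Ekedahl--Oort stratification with its group-theoretic model; (b) the fine Deligne--Lusztig variety attached by Hoeve to $w$, which a priori lives in an affine flag variety, decomposes as a disjoint union of classical Deligne--Lusztig varieties of type $X(w)$, and this decomposition is the very one underlying Theorem \ref{Main-1} --- both being instances of the straightening of a cordial (here, finite-Coxeter-type) element; and (c) the two resulting sets of connected components coincide, which follows from the $p$-adic (Langlands--Rapoport) uniformization of the supersingular locus encoded in Axiom \ref{uniformize}. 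For part (1) I would carry out the analogous comparison against G\"{o}rtz--Yu \cite{GY10,GY12}, whose description parametrizes the components of the basic Kottwitz--Rapoport stratum at Iwahori level by self-dual-up-to-scalar lattice chains in the Bruhat--Tits building of $\GSp_{2g}(\QQ_{p})$ of prescribed relative position, each component being a Deligne--Lusztig variety in a symplectic or general linear group over $\FF_{p}$. I would translate their lattice-chain data into alcoves and elements of $\Adm(\mu)$, use that at Iwahori level the He--Rapoport map is essentially the identity on the index set, and then match the parahoric-theoretic data of Theorem \ref{Main-1} with G\"{o}rtz--Yu's lattice-theoretic data stratum by stratum.

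The main obstacle will be the bookkeeping needed to reconcile three genuinely different combinatorial formalisms --- $\mathrm{BT}_{1}$ group schemes with their canonical filtrations (Hoeve), self-dual lattice chains in the building (G\"{o}rtz--Yu), and minimal-length representatives of $\sigma$-conjugacy classes in the Iwahori--Weyl group together with the reductive quotients of parahorics (G\"{o}rtz--He--Nie) --- and, within that, tracking the Frobenius twist that pins down the $\FF_{p}$-form, and hence the defining group-with-endomorphism, of each Deligne--Lusztig variety. A secondary subtlety is that Hoeve and G\"{o}rtz--Yu argue on the Shimura variety itself while the component count coming out of Theorem \ref{Main-1} is a priori purely group-theoretic; closing this gap is exactly what Axiom \ref{uniformize} is designed for, so the argument ultimately rests on the fact that this axiom holds for Siegel modular varieties, which is classical via the uniformization of the supersingular locus.
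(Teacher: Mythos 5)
Your overall plan --- specialize Theorem \ref{Main-1} to the Siegel datum at Iwahori and at hyperspecial level, use Axiom \ref{uniformize} to transport the group-theoretic description to the Shimura variety, then compare index sets and Deligne--Lusztig varieties with G\"{o}rtz--Yu and Hoeve --- is the right frame, and you correctly identify that Axiom \ref{uniformize} is what closes the gap between a purely group-theoretic component count and the actual strata on the Shimura variety. But what you write is a roadmap, not a proof: the decisive combinatorial content is absent, and one structural step that makes the comparison literal is missing.

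The structural step is this. Both G\"{o}rtz--Yu (for basic KR strata) and Hoeve (for basic EO strata) present the stratum as a product: a finite discrete set of ``superspecial'' points times a single Deligne--Lusztig variety. To land in that shape, the paper does not just read off Theorem \ref{Main-1}; after computing $\supp_{\sigma}(w)$ it replaces the index set $J_{\tau}/J_{\tau}\cap\breve{K}_{\supp_{\sigma}(w)}$ by the coarser $J_{\tau}/J_{\tau}\cap\breve{K}_{J}$ for the $\tau\sigma$-stable set $J=\tilde{\mathbb{S}}\setminus\{c,g-c\}$ containing $\supp_{\sigma}(w)$, absorbing the intermediate quotient into the Deligne--Lusztig factor $Y(J,w)$; and then it identifies $I_{\tau}(\QQ)\backslash(J_{\tau}/J_{\tau}\cap\breve{K}_{J})\times G(\AAA^{p}_{f})/K^{p}$ with the minimal EKOR stratum $\mathcal{A}_{J,\tau}$ (Lemma \ref{min}, resting on \cite[Theorem 2.1.1]{GH15} and on the finite \'etaleness of $\mathcal{A}_{I,\tau}\to\mathcal{A}_{J,\tau}$ from \cite{SYZ19}). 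Without that identification you have a disjoint union indexed by an abstract double coset, and you cannot match it against G\"{o}rtz--Yu's or Hoeve's explicit product with a superspecial stratum. Your check ``(c) the two resulting sets of connected components coincide'' names the problem but gives no mechanism; ``$p$-adic uniformization'' alone does not yield Lemma \ref{min}.

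Secondly, you never compute the data that pin down which Deligne--Lusztig variety occurs. For Siegel you need the criterion that $W_{\supp_{\sigma}(w)}$ is finite iff $w\in W_{\{c,g-c\}}\tau$ (Proposition \ref{basic-fin}), and in the hyperspecial case the explicit identities $\supp_{\sigma}(w)=\{s_{0},\dots,s_{c-1}\}\cup\{s_{g-c+1},\dots,s_{g}\}$ for $w\in{}^{c}W\setminus{}^{c-1}W$ and $I(K,w,\sigma)=\{s_{c+1},\dots,s_{g-c-1}\}$ (Lemmas \ref{5.5}, \ref{5.6}); the latter rests on the observation that these $w$ are $\sigma$-Coxeter in $W_{\supp_{\sigma}(w)}$, so $I(K,w,\sigma)$ is the set of simple reflections commuting with $\supp_{\sigma}(w)$. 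These computations are what let you recognize $Y_{I(K,w,\sigma)}(w)$ as Hoeve's $X_{c(w)}\{\tau(w)\}$ on the nose. Your description of Hoeve's object as living ``a priori in an affine flag variety'' and ``decomposing'' as classical Deligne--Lusztig varieties is not accurate --- it is already a fine Deligne--Lusztig variety in the flag variety of the finite reductive group $\breve{K}_{\supp_{\sigma}(w)}$, and the task is to match it with the $Y_{c}(w)$ produced by Theorem \ref{Main-1}, not to decompose it further. Finally, the comparison with G\"{o}rtz--Yu is not a translation of lattice-chain data; once Lemma \ref{min} and the factored form $\mathcal{A}_{J,\tau}\times Y(J,w)$ are in hand, it is a direct match with \cite[Corollary 6.5]{GY10}, modulo the covariant-vs-contravariant Dieudonn\'e module convention that accounts for seeing $Y_{J}(w)$ rather than $X(w^{-1})$ --- a point worth noting explicitly, and absent from your sketch.

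Also a small imprecision: you say each variety is ``$X(w)$ for a Coxeter-type element''; this is true at hyperspecial level, where the relevant $w\in{}^{c}W\setminus{}^{c-1}W$ are $\sigma$-Coxeter, but at Iwahori level the $w\in\Adm(\{\mu\})\cap W_{\{c,g-c\}}\tau$ need not be Coxeter, and the Deligne--Lusztig varieties $Y(J,w)$ that appear are not in general of Coxeter type.
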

The descriptions of the KR strata for Siegel modular varieties with Iwahori level will be discussed in \S 5.2 and the proof of $(1)$ of the theorem above can be found in Theorem \ref{GY-comp}.  The descriptions of the Ekedahl-Oort strata for the Siegel modular varieties with hyperspecial level will be discussed in \S 5.3 and $(2)$ is proved in Theorem \ref{Ho-comp}.

\subsection*{Acknowledgement} This work is completed when the author is a postdoctoral fellow at McGill university and he would like to thank Henri Darmon and Pengfei Guan for their generous support.

\section{Preliminaries}
\subsection{The Iwahori-Weyl group}\label{preli} Let $p$ be an odd prime. Let $F$ be a non-archimedean local field with valuation ring $\mathcal{O}_{F}$ and residue field $k$ containing $\FF_{p}$. We denote by $\breve{F}$ the completion of the maximal unramified extension of $F$ inside a fixed algebraic closure $\bar{F}$ with  $\breve{\mathcal{O}}_{F}$ the valuation ring and $\FF$ its residue field. Let $\sigma$ be the Frobenius automorphism acting on $\FF$ and we use the symbol for its lift on $\breve{F}$. We set $\Gamma=\Gal(\bar{F}/F)$ and $\Gamma_{0}=\Gal(\bar{F}/\breve{F})$ which we identify with the inertia group of $F$. 

Let $G$ be a connected reductive group over $F$. We write $\breve{G}$ for its base change to $\breve{F}$ which is quasi-split. We choose a maximal $\breve{F}$-split torus $S$ of $G$ and denote by $T$ its centralizer. Since $\breve{G}$ is quasi-split, $T$ is a maximal torus of $G$. Let $N_{T}$ be the normalizer of $T$ in $G$. Then we define the \emph{finite Weyl group} $W$  by $W=N_{T}(\breve{F})/T(\breve{F})$ with its natural action by $\sigma$. The \emph{Iwahori-Weyl  group} is defined similarly by 
\begin{equation*}
\tilde{W}=N_{T}(\breve{F})/T(\breve{F})_{1},
\end{equation*}
where $T(\breve{F})_{1}$ is the unique parahoric subgroup of ${T}(\breve{F})$. The torus $S$ defines an apartment $\mathfrak{A}$ in the Bruhat-Tits building of ${G}$ over $\breve{F}$.  Its underlying affine space is given $V=X_{*}(T)_{\Gamma_{0}}\otimes_{\ZZ} \RR$ and $\tilde{W}$ acts on it by affine transformations. The action $\sigma$ on $\breve{T}$ induces an action on $V$ and we choose a $\sigma$-invariant alcove $\mathfrak{a}$ which we will refer to as the base alcove. The stabilizer in $\breve{G}$ of the base alcove $\mathfrak{a}$ is the \emph{Iwahori subgroup} $\breve{I}$ in ${G}(\breve{F})$ corresponding to $\mathfrak{a}$. We denote by $V_{+}$ the closure of the dominant Weyl chamber in $V$.  We also fix a special vertex in the closure of $\mathfrak{a}$ which is invariant under the Frobenius of the unique quasi-split inner form of $G$. These choices give us a splitting of the Iwahori-Weyl group
\begin{equation*}
\tilde{W}=X_{*}(T)_{\Gamma_{0}}\rtimes W.
\end{equation*}
When there is no harm of confusion we will use the same symbol $w$ to denote an element in $\tilde{W}$ and its lift in $N_{T}(\breve{F})$. For an element $\lambda\in X_{*}(T)_{\Gamma_{0}}$, we will write $t^{\lambda}$ when we consider $\lambda$ as element of $\tilde{W}$. The stabilizer of $\mathfrak{a}$ in $\tilde{W}$ will be denoted by $\Omega$ and we have a decomposition $\tilde{W}=W_{a}\rtimes \Omega$ where $W_{a}$ is the \emph{affine Weyl group} of $G$. This is the same as the Iwahori-Weyl group of the simply connected cover of the derived group of $G$.  Thus we have
\begin{equation*} 
W_{a}=X_{*}(T_{sc})_{\Gamma_{0}}\rtimes W
\end{equation*}
where $T_{sc}$ is the simply connected cover of the image of $T$ in the derived group. Note that $X_{*}(T_{sc})$ is the coroot lattice and therefore the quotient $X_{*}(T)/X_{*}(T_{sc})$ is by definition the fundamental group $\pi_{1}(G)$ of $G$. We then arrive at the following relationship between the Iwahori-Weyl group and the affine Weyl group 
\begin{equation*}
\tilde{W}=W_{a}\rtimes \pi_{1}(G)_{\Gamma_{0}}.
\end{equation*}
And we can identify $\Omega$ with $\pi_{1}(G)_{\Gamma_{0}}$.  We also obtain in this way a map 
\begin{equation}
\kappa_{\tilde{W}}: \tilde{W}\rightarrow \pi_{1}(G)_{\Gamma}
\end{equation}
by further projecting the natural map $\tilde{W}\rightarrow \pi_{1}(G)_{\Gamma_{0}}$ to $\pi_{1}(G)_{\Gamma}$.
The group $W_{a}$ is Coxeter group and we denote by $\tilde{\mathbb{S}}$ the set of simple reflections in $W_{a}$. One has a Bruhat order on it and a well-defined length function on $W_{a}$. We extend them to $\tilde{W}$ in the usual way: if $w=w_{0}\tau\in W_{a}\rtimes \pi_{1}(G)_{\Gamma_{0}}$ with $w_{0}\in W_{a}$ and $\tau\in \pi_{1}(G)_{\Gamma_{0}}$, then we define $l(w)=l(w_{0})$ and we write $w_{1}\tau_{1}\leq w_{2}\tau_{2}$ if and only if $\tau_{1}=\tau_{2}$ and $w_{1}\leq w_{2}$. 

Let $K\subset \tilde{\mathbb{S}}$ be a subset of the set of simple reflections. We denote by $W_{K}$ be the subgroup of $\tilde{W}$ generated by the simple reflections in $K$ and denote by $^{K}\tilde{W}$ the set of minimal length representatives in the cosets of $W_{K}\backslash \tilde{W}$. 
\subsection{$\sigma$-conjugacy classes} We let $B(G)$ be the set of $\sigma$-conjugacy classes of $G(\breve{F})$. Then we have the \emph{Newton map} 
\begin{equation*}
\nu_{G}: B(G)\rightarrow ((X_{*}(T)_{\Gamma_{0}, \QQ})^{+})^{\langle\sigma\rangle}
\end{equation*}
where $X_{*}(T)^{+}_{\Gamma_{0}, \QQ}=X_{*}(T)_{\Gamma_{0}, \QQ}\cap V_{+}$.
And the \emph{Kottwitz map}
\begin{equation*}
\kappa_{G}: B(G)\rightarrow \pi_{1}(G)_{\Gamma}.
\end{equation*}
For a review of the definitions of these maps, we refer the reader to \cite{RR96} and \cite{RV14}, we only remark that $\nu_{G}$ should be considered as the group theoretic incarnation of the notion of Newton polygons and $\kappa_{G}$ encodes the end point of the Newton polygon. The joint map
\begin{equation*}
\nu_{G}\times \kappa_{G}: B(G)\rightarrow ((X_{*}(T)_{\Gamma_{0}, \QQ})^{+})^{\langle\sigma\rangle}\times\pi_{1}(G)_{\Gamma}.
\end{equation*}
is in fact injective. The set $B(G)$ is equipped with a partial order: we write $[b]\leq [b^{\prime}] $ for $[b], [b^{\prime}]\in B(G)$ if $\kappa_{G}([b])=\kappa_{G}([b^{\prime}])$ and $\nu_{G}([b])\leq\nu_{G}([b^{\prime}])$. Here $\nu_{G}([b])\leq\nu_{G}([b^{\prime}])$ if $\nu_{G}([b])-\nu_{G}([b^{\prime}])$ is non-negative $\QQ$-sum of positive coroot.  

Let $\{\mu\}$ be a conjugacy class of cocharacters over $\bar{F}$ of $G$. An element $[b]\in B(G)$ is called \emph{acceptable} for $\{\mu\}$ if $\nu([b])\leq \bar{\mu}$ where $\bar{\mu}=[\Gamma: \Gamma_{\mu}]^{-1}\sum_{\gamma\in \Gamma/\Gamma_{\mu}}\gamma(\mu)$ is the Galois average of $\mu$ which we define to be the unique dominant element in $\{\mu\}$ with $\Gamma_{\mu}$ its stabilizer in $\Gamma$. An acceptable element $[b]$ is said be \emph{neutral} if $\kappa([b])=\mu^{\natural}$ with $\mu^{\natural}$ the common image of $\{\mu\}$ in $\pi_{1}(G)_{\Gamma}$, see \cite[Lemma 3.1]{Rap05}. We define the set of \emph{neutral acceptable elements} with respect to $\{\mu\}$ in $B(G)$ by
\begin{equation}
B(G, \{\mu\})=\{[b]\in B(G): \kappa([b])=\mu^{\natural}, \nu([b])\leq \bar{\mu}\}. 
\end{equation}
This is in fact a finite set and inherits a partial order from that of $B(G)$ and the unique minimal class is called the \emph{basic class}.  We denote by 
\begin{equation}\label{basic-tau}
\tau=\tau_{\{\mu\}}
\end{equation}
the unique element of $\tilde{W}$ of length $0$ mapping to $\mu^{\natural}\in\pi_{1}(G)_{\Gamma_{0}}$ and its $\sigma$-conjugacy class is the basic class defined above. For $w\in \tilde{W}$, we consider $w\sigma$ as an element in $\tilde{W}\rtimes \langle\sigma\rangle$. There exist $n$ such that $(w\sigma)^{n}=t^{\lambda}$ for an element $\lambda\in X_{*}(T)_{\Gamma_{0}}$. Then the \emph{Newton vector} $\bar{\nu}_{w}$ of $w$ is defined to be the unique dominant element in the $W$-orbit of  $\lambda/n\in X_{*}(T)_{\Gamma_{0}, \QQ}$. An element $w\in \tilde{W}$ is called $\sigma$-straight if $l((w\sigma)^{m})=ml(w)$ for all $m\in \mathbb{N}$ where we extend the length function $l(\cdot)$ to $\tilde{W}\rtimes \langle\sigma\rangle$ by requiring that $l(\sigma)=0$. We denote by $B(\tilde{W})_{\sigma}$ the set of $\sigma$-conjugacy classes in $\tilde{W}$ and we say a $\sigma$-conjugacy class straight if it contains a $\sigma$-straight element. We denote by $B(\tilde{W})_{\sigma-\mathrm{str}}$ the set of straight $\sigma$-conjugacy classes of $\tilde{W}$. This set is closely related to the Kottwitz set $B(G)$, in fact we have the following theorem 
\begin{theorem}[{\cite[Theorem3.3]{He14}}]\label{W-B}
The map $\Psi: B(\tilde{W})_{\sigma-\mathrm{str}}\rightarrow B(G)$ induced by the inclusion of $N_{T}(\breve{F})\subset G(\breve{F})$ is a bijection. 
\end{theorem}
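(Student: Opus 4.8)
The plan is to establish well-definedness, surjectivity and injectivity of $\Psi$ separately, the unifying idea being to match the combinatorial invariants of a $\sigma$-straight element $w\in\tilde{W}$ --- its Newton vector $\bar{\nu}_{w}$ and its Kottwitz image $\kappa_{\tilde{W}}(w)\in\pi_{1}(G)_{\Gamma}$ --- with the Newton and Kottwitz invariants of the class $\Psi([w])\in B(G)$, and then to invoke the injectivity of $\nu_{G}\times\kappa_{G}$ on $B(G)$. Well-definedness is immediate, since a $\sigma$-conjugacy in $\tilde{W}$ lifts to one inside $N_{T}(\breve{F})\subseteq G(\breve{F})$, so the class of a lift $\dot{w}$ in $B(G)$ depends only on the $\sigma$-conjugacy class of $w$. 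The first substantial input is the identity $\nu_{G}(\Psi([w]))=\bar{\nu}_{w}$ and $\kappa_{G}(\Psi([w]))=\kappa_{\tilde{W}}(w)$ for $\sigma$-straight $w$: the statement about $\kappa$ is formal, as $\kappa_{\tilde{W}}$ is by construction the restriction of the Kottwitz homomorphism to $N_{T}(\breve{F})$; for $\nu$, one uses that $\sigma$-straightness, $l((w\sigma)^{m})=m\,l(w)$, forbids cancellation when one iterates $w\sigma$, so that, writing $(w\sigma)^{n}=t^{\lambda}$ up to a length-zero factor, one can read off the Newton point of $\Psi([w])$ directly as the dominant representative of $\lambda/n$, namely $\bar{\nu}_{w}$.

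For surjectivity I would use the Hodge--Newton reduction to the basic case. Given $[b]\in B(G)$, put $\nu=\nu_{G}([b])$ and let $M=M_{\nu}\subseteq G$ be its centralizer, a $\sigma$-stable semistandard Levi. By Kottwitz's description of $B(M)$ and $B(G)$ and the behaviour of the Newton and Kottwitz maps under $B(M)\to B(G)$, the class $[b]$ is the image of a basic class $[b_{M}]\in B(M)$ with Newton point $\nu$, which is central in $M$. Kottwitz's classification of basic classes then presents $[b_{M}]$ as the $\sigma$-conjugacy class of a length-zero element $\tau$ of $\Omega_{M}\subseteq\tilde{W}_{M}$, the stabilizer of the base alcove of $M$; by construction $\Psi([\tau])=[b]$. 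It remains to see that $\tau$ is $\sigma$-straight in the ambient $\tilde{W}$, and here I would argue geometrically in the building of $G$: $\tau\sigma$ preserves the $M$-alcove $\mathfrak{a}_{M}\supseteq\mathfrak{a}$, so every iterate $(\tau\sigma)^{k}\mathfrak{a}$ stays in $\mathfrak{a}_{M}$, and since the eventual drift of the sequence is by the $G$-dominant vector $\bar{\nu}_{\tau}=\nu$, no affine root of $G$ switches sign more than once along it; this non-cancellation yields the additivity $l((\tau\sigma)^{m})=m\,l(\tau)$, i.e.\ $\sigma$-straightness of $\tau$.

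For injectivity, let $w,w'\in\tilde{W}$ be $\sigma$-straight with $\Psi([w])=\Psi([w'])=:[b]$. By the identity above, $\bar{\nu}_{w}=\bar{\nu}_{w'}=:\nu$, so $M:=M_{\bar{\nu}_{w}}=M_{\bar{\nu}_{w'}}$, and $\kappa_{\tilde{W}}(w)=\kappa_{\tilde{W}}(w')$. Using the structural fact that every straight $\sigma$-conjugacy class of $\tilde{W}$ meets $\Omega_{M}$ for $M$ the centralizer of its Newton vector --- the converse to the straightness verification above --- I may assume, after $\sigma$-conjugating, that $w,w'\in\Omega_{M}$, i.e.\ they are length-zero, hence basic, elements of $\tilde{W}_{M}$ representing classes $[b_{M}],[b_{M}']\in B(M)$ with common Newton point $\nu$ and common image $[b]$ in $B(G)$. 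Since $\nu$ is $G$-dominant, the map $B(M)\to B(G)$ is injective on classes with $G$-dominant Newton point, so $[b_{M}]=[b_{M}']$; by Kottwitz's classification of basic classes this forces $w$ and $w'$ to be $\sigma$-conjugate in $\tilde{W}_{M}$, a fortiori in $\tilde{W}$. Hence $[w]=[w']$, and $\Psi$ is injective.

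The main obstacle is the structural statement used in both halves: that, up to $\sigma$-conjugacy in $\tilde{W}$, a $\sigma$-straight element is a length-zero element of the Levi-type subgroup $\tilde{W}_{M}$ attached to its own Newton face $M=M_{\bar{\nu}_{w}}$, and that straightness in $\tilde{W}_{M}$ and in $\tilde{W}$ coincide in that situation. Establishing the ``conjugate into the Levi'' direction is where the real work lies: it calls for an induction on length using the exchange condition in the Coxeter group $W_{a}$, carefully tracking the Newton vector so as not to leave the face cut out by $\bar{\nu}_{w}$. Once this combinatorial input is in place, bijectivity of $\Psi$ follows formally by transporting the invariants $(\bar{\nu}_{w},\kappa_{\tilde{W}}(w))$ on the $\tilde{W}$-side to $(\nu_{G},\kappa_{G})$ on the $B(G)$-side.
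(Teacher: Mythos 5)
The paper itself gives no proof of this statement --- it is cited verbatim from [He14, Theorem~3.3] --- so the comparison must be against He's argument, which in turn rests on He--Nie's classification of straight $\sigma$-conjugacy classes of $\tilde{W}$. Your proposal reproduces the essential structure of that argument: compute $\nu_{G}$ and $\kappa_{G}$ of $\Psi([w])$ in terms of $\bar{\nu}_{w}$ and $\kappa_{\tilde{W}}(w)$; invoke the injectivity of $\nu_{G}\times\kappa_{G}$ on $B(G)$ (Kottwitz, [RR96]); and then show that $(\bar{\nu}_{w},\kappa_{\tilde{W}}(w))$ also classifies straight $\sigma$-conjugacy classes and realizes exactly the image of $B(G)$. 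Your Levi reduction for both surjectivity and injectivity is precisely the mechanism of He--Nie, and you correctly flag the genuine content: that every straight element can be $\sigma$-conjugated in $\tilde{W}$ into $\Omega_{M}$ with $M=M_{\bar{\nu}_{w}}$, and that a length-zero element of $\tilde{W}_{M}$ with $G$-dominant Newton vector is $\sigma$-straight in $\tilde{W}$.

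Two places deserve a caution, though neither is a fatal gap. First, the injectivity of $B(M)\to B(G)$ on classes with $G$-dominant Newton point is a genuine theorem of Kottwitz on Levi descent, not a formality one reads off the invariants $(\nu,\kappa)$: it is needed precisely because $\pi_{1}(M)_{\Gamma}\to\pi_{1}(G)_{\Gamma}$ has a kernel, so the identity $\kappa_{M}([b_{M}])=\kappa_{M}([b'_{M}])$ does not drop out immediately from $\kappa_{G}([b_{M}])=\kappa_{G}([b'_{M}])$ together with equal Newton points. Second, your geometric ``no affine hyperplane is crossed twice'' sketch of $\sigma$-straightness is the right intuition but is not yet a proof; to make it one you want the explicit length criterion $l(w)=\langle\bar{\nu}_{w},2\rho\rangle$ for $\sigma$-straightness and the computation that a length-zero element of $\tilde{W}_{M}$ with $G$-dominant Newton vector satisfies $l_{\tilde{W}}(\tau)=\langle\bar{\nu}_{\tau},2\rho_{G}-2\rho_{M}\rangle=\langle\bar{\nu}_{\tau},2\rho_{G}\rangle$. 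With those two inputs spelled out (or cited), your reconstruction is correct and follows essentially the same route as He's.
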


We finally remark that the set $B(G, \{\mu\})$ should be thought of as the collection of Newton polygons that satisfy the Mazur's inequality and is the index set of the Newton stratification of Shimura varieties.

\subsection{The admissible set} Consider again a conjugacy class $\{\mu\}$ of $G$ over $\bar{F}$. Recall we have fixed a dominant representative $\mu\in \{\mu\}$ and we use the same notation for its image in $X_{*}(T)_{\Gamma_{0}}$. The \emph{admissible set } for $\{\mu\}$ is defined by
\begin{equation*}
\mathrm{Adm(\{\mu\})}=\{x\in\tilde{W}: x\leq t^{w(\mu)}\text{ for some finite Weyl group element } w\}. 
\end{equation*}
From the definition we immediately obtain $\Adm(\{\mu\})\subset W_{a}\tau$. Suppose $K\subset \tilde{\mathbb{S}}$ is a $\sigma$-invariant subset. We set 
\begin{equation*}
\begin{split}
&\mathrm{Adm}^{K}(\{\mu\})=W_{K}\mathrm{Adm}(\{\mu\})W_{K}\subset \tilde{W};\\
&\mathrm{Adm}(\{\mu\})_{K}=W_{K}\backslash\mathrm{Adm}^{K}(\{\mu\})/W_{K}\subset W_{K}\backslash \tilde{W}/W_{K}.\\
\end{split}
\end{equation*}
Let $\Adm(\{\mu\})_{\mathrm{str}}$ be set of the $\sigma$-straight elements in $\Adm(\{\mu\})$ and let $B(\tilde{W}, \{\mu\})_{\mathrm{str}}$ be the image of it in $B(\tilde{W})$. The following theorem refines the previous Theorem \ref{W-B}. 

\begin{theorem}
The map $\Psi$ restricts to a bijection between $B(\tilde{W}, \{\mu\})_{\mathrm{str}}$ and $B(G, \{\mu\})$. 
\end{theorem}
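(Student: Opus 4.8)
The plan is to deduce this refinement from the already-stated Theorem~\ref{W-B}, which gives the bijection $\Psi: B(\tilde{W})_{\sigma-\mathrm{str}}\rightarrow B(G)$, by showing that $\Psi$ carries the subset $B(\tilde{W}, \{\mu\})_{\mathrm{str}}$ onto $B(G, \{\mu\})$ and nothing more. Since $\Psi$ is already known to be injective, it suffices to prove the two inclusions $\Psi(B(\tilde{W}, \{\mu\})_{\mathrm{str}})\subseteq B(G, \{\mu\})$ and $B(G, \{\mu\})\subseteq \Psi(B(\tilde{W}, \{\mu\})_{\mathrm{str}})$, i.e.\ that a straight $\sigma$-conjugacy class meeting $\Adm(\{\mu\})$ maps to a neutral acceptable element, and conversely that every neutral acceptable $[b]$ has a $\sigma$-straight representative lying in $\Adm(\{\mu\})$.

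First I would record the compatibility of the various invariants. For $w\in\tilde W$ the Kottwitz-type map $\kappa_{\tilde W}$ sends $w$ to the image of $w$ in $\pi_1(G)_\Gamma$, and this is compatible with $\kappa_G\circ\Psi$; hence the whole of $\Adm(\{\mu\})\subset W_a\tau$ has Kottwitz invariant $\mu^\natural$, so the condition $\kappa([b])=\mu^\natural$ is automatic on the image side. Next, for a $\sigma$-straight $w$ the Newton vector $\bar\nu_w$ (defined via $(w\sigma)^n=t^\lambda$, $\lambda/n$ made dominant) coincides with $\nu_G(\Psi([w]))$; this is part of the package around Theorem~\ref{W-B}. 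So the content of the statement reduces to the purely combinatorial assertion: for $w\in\tilde W$ a $\sigma$-straight element, $w\in\Adm(\{\mu\})$ (up to $\sigma$-conjugacy within the straight locus) if and only if $\bar\nu_w\le\bar\mu$. The inclusion ``$\Rightarrow$'' is the easier direction: if $x\le t^{w'(\mu)}$ then a standard averaging/convexity argument (the Newton point is a ``length-type'' convex functional, cf.\ the theory of $\Adm$ versus the $\sigma$-conjugacy stratification in \cite{He14}) gives $\bar\nu_x\le\bar\mu$, and one checks $\sigma$-straightness interacts well with passing to a representative.

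The harder direction, and the main obstacle, is ``$\Leftarrow$'': given $[b]\in B(G,\{\mu\})$, produce a $\sigma$-straight representative inside $\Adm(\{\mu\})$. Here I would invoke He's results on minimal length elements in $\sigma$-conjugacy classes: every straight $\sigma$-conjugacy class in $\tilde W$ contains a $\sigma$-straight element, and one must arrange that such an element can be chosen $\le t^{w'(\mu)}$ for some $w'\in W$. The key input is the characterization (due to He, building on Kottwitz--Rapoport) that $B(G,\{\mu\})$ is exactly the image under $\Psi$ of the straight classes meeting $\Adm(\{\mu\})$ — this is precisely the statement that the Newton stratification index set is ``seen'' by the admissible set. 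Concretely I would: (i) take the basic class $[\tau]$, which is visibly straight and lies in $\Adm(\{\mu\})$ since $\tau\le t^\mu$; (ii) for a general $[b]\in B(G,\{\mu\})$ use that $\nu_G([b])\le\bar\mu$ to find, via the explicit description of $\sigma$-straight elements as $t^\lambda w$ with $\lambda$ in the relevant face, an element $x$ with $\bar\nu_x=\nu_G([b])$ and $x\le t^{w'(\mu)}$, the last inequality following from $\nu_G([b])\le\bar\mu$ together with the fact that the admissible set is a union of ``$\le t^{w'(\mu)}$'' and is stable enough under the Bruhat order to capture all intermediate Newton points. The delicate point throughout is keeping track of $\sigma$ (the twisted conjugacy, the $\sigma$-invariance of $K$ later, the $\sigma$-average $\bar\mu$), but for this particular statement no parahoric $K$ enters, so the argument is entirely inside $\tilde W$ and reduces to citing \cite{He14} in the form just described. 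I would cite \cite[\S3--4]{He14} (or the survey \cite{HR17}) for the precise combinatorial lemmas rather than reproving them.
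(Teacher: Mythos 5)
The paper states this theorem without proof: it is quoted as a refinement of He's bijection (Theorem~\ref{W-B}), and the reader is implicitly referred to He's work for its justification. Your reduction to (a) the compatibility of the Kottwitz and Newton invariants with $\Psi$, and (b) a purely combinatorial statement inside $\tilde{W}$, is the correct framing, and the ``$\Rightarrow$'' direction (that elements of $\Adm(\{\mu\})$ all have Kottwitz invariant $\mu^{\natural}$ and Newton point bounded above by $\bar{\mu}$) is a genuine, non-circular input.

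The surjectivity sketch, however, is circular as written. You declare the ``key input'' to be that $B(G,\{\mu\})$ is exactly the image under $\Psi$ of the straight $\sigma$-conjugacy classes meeting $\Adm(\{\mu\})$ --- but that is verbatim the theorem being proved, not a lemma feeding into it. Likewise, your step (ii) rests on the assertion that the admissible set ``is stable enough under the Bruhat order to capture all intermediate Newton points,'' which is again precisely the content of the statement. A genuine proof of surjectivity must, for each $[b]\in B(G,\{\mu\})$, \emph{construct} a $\sigma$-straight element in the class $\Psi^{-1}([b])$ and then check directly that it lies below some $t^{w'(\mu)}$; in He's treatment this is carried out via the theory of fundamental alcoves and $P$-alcoves, using the minimal-length-element machinery of \cite{He14} together with the admissible-set refinement in \cite{He16}. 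Since the paper itself offers no proof, a bare citation to \cite{He14} and \cite{He16} would suffice --- but it should be presented as a citation, not dressed up as an argument whose crux restates the goal.
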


\section{EKOR stratifications of Shimura varieties}
\subsection{Shimura varieties} Let $({\bf{G}}, \{h\})$ be a Shimura datum and let ${\bf{K}}={\bf{K}}^{p}{\bf{K}}_{p}$ be an open compact subgroup of ${\bf{G}}(\mathbb{A}_{f})$ with ${\bf{K}}^{p}$ sufficiently small and ${\bf{K}}_{p}$ be a standard parahoric subgroup $G(\QQ_{p})$ with $G={\bf{G}}_{\QQ_{p}}$ containing a fixed Iwahori subrgoup $I$ of $G(\QQ_{p})$. Let $\mathrm{Sh}_{{\bf{K}}}=\mathrm{Sh}({\bf{G}}, \{h\})_{{\bf{K}}}$ be the corresponding Shimura variety defined over a reflex field ${\bf{E}}$. We denote by $E$ the completion of ${\bf E}$ at a place ${\bf p}$ above $p$ and $\mathcal{O}_{E}$ its valuation ring. Let $k_{E}$ be the residue field of $E$. We assume that there exits a suitable integral model $\mathcal{S}h_{{\bf{K}}}$ of $\mathrm{Sh}_{{\bf{K}}}$ over $\mathcal{O}_{E}$. Let $Sh_{K}$ be the special fiber of $\mathcal{S}h_{{\bf{K}}}$ over $k_{E}$.  The Shimura datum $({\bf{G}}, \{h\})$ gives a conjugacy class of cocharacters $\{\mu\}$ of $G$ defined over $E$. Let $\breve{K}$ be the parahoric subgroup of ${G}(\breve{\QQ}_{p})$ corresponding to ${\bf{K}}_{p}$. Let $\mathcal{G}_{K}$ be the Bruhat-Tits group scheme over $\ZZ_{p}$ correponding to ${\bf{K}}_{p}$ and we set $\mathcal{G}_{K, k_{E}}=\mathcal{G}_{K}\otimes_{\ZZ_{p}} k_{E}$. We write $G(\breve{\QQ}_{p})/\breve{K}_{\sigma}$ for the $\sigma$-conjugacy class of $G(\breve{\QQ}_{p})$ by $\breve{K}$.  In particular, we have $B(G)=G(\breve{\QQ}_{p})/G(\breve{\QQ}_{p})_{\sigma}$. 

\subsection{Stratifications for Shimura varieties}
We assume that $Sh_{K}$ satisfies the axioms of He-Rapoport in \cite{HR17}. We will not recall these axioms one by one, instead we record the following commutative diagram whose existence follows from the axioms of He and Rapoport.

\begin{center}
 \begin{tikzcd}[row sep=tiny]
  & & \breve{K} \backslash G(\breve{\QQ}_{p})/\breve{K}\\
  Sh_{K} \arrow[urr, bend left, "\lambda_{K}" ] \arrow[drr, bend right, "\delta_{K}"]  \arrow[r, "\gamma_{K}"] & G(\breve{\QQ}_{p})/\breve{K}_{\sigma} \arrow [ur, "l_{K}"] \arrow[dr, "d_{K}"]\\
  & &  B(G) 
\end{tikzcd}
\end{center}

\begin{theorem}[\cite{HR17}] \label{im}
We have 
\begin{enumerate}
\item $\mathrm{Im}(\lambda_{K})\subset \mathrm{Adm}(\{\mu\})_{K}$;
\item $\mathrm{Im}(\delta_{K})\subset B(G, \{\mu\})$;
\item $\mathrm{Im}(\gamma_{K})=\bigsqcup_{w\in \mathrm{Adm}(\{\mu\})_{K}}\breve{K}w\breve{K}/K_{\sigma}$.
\end{enumerate}
\end{theorem}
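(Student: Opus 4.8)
The statement is a recapitulation of what the He--Rapoport axioms \cite{HR17} yield for $\mathcal{S}h_{\mathbf{K}}$, so the plan is to read each of (1)--(3) off the commutative diagram above, whose existence is already granted by those axioms, together with the group-theoretic facts recalled above. Recall what the diagram encodes: the \emph{local model axiom} provides a local model diagram $\mathcal{S}h_{\mathbf{K}}\xleftarrow{\ \pi\ }\widetilde{\mathcal{S}h}_{\mathbf{K}}\xrightarrow{\ q\ }M^{\loc}_{\{\mu\}}$ with $\pi$ a $\mathcal{G}_{K}$-torsor and $q$ smooth, hence the map $\lambda_{K}=l_{K}\circ\gamma_{K}$ on $\bar{\FF}_{p}$-points; and the \emph{crystalline axiom} attaches to $x\in Sh_{K}(\bar{\FF}_{p})$ a $p$-divisible group with $\mathcal{G}_{K}$-structure carrying a Hodge filtration of type $\{\mu\}$, hence the element $\gamma_{K}(x)\in G(\breve{\QQ}_{p})/\breve{K}_{\sigma}$ and, by passing to full $\sigma$-conjugacy classes, $\delta_{K}(x)=d_{K}(\gamma_{K}(x))\in B(G)$. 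Granting the diagram, the proof reduces to identifying the three images.

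For (1) I would argue as follows: since $\pi$ is a torsor and $\bar{\FF}_{p}$ is algebraically closed, any $x\in Sh_{K}(\bar{\FF}_{p})$ lifts along $\pi$ and its image under $q$ lies in a well-defined $\mathcal{G}_{K,k_{E}}$-orbit of $M^{\loc}_{\{\mu\}}\otimes k_{E}$, which is precisely $\lambda_{K}(x)$; by the structure theory of local models (equivalently, by the relevant He--Rapoport axiom) the $\bar{\FF}_{p}$-points of $M^{\loc}_{\{\mu\}}\otimes k_{E}$ are covered by the Schubert cells indexed by $\mathrm{Adm}(\{\mu\})_{K}\subseteq W_{K}\backslash\tilde{W}/W_{K}$, giving $\mathrm{Im}(\lambda_{K})\subseteq\mathrm{Adm}(\{\mu\})_{K}$. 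For (2), writing $\gamma_{K}(x)=[b_{x}]$: the Hodge filtration being of type $\{\mu\}$, Mazur's inequality in the group-theoretic form of Rapoport--Richartz \cite{RR96} gives $\nu_{G}([b_{x}])\le\bar{\mu}$, while the axiom fixing the Kottwitz invariant gives $\kappa_{G}([b_{x}])=\mu^{\natural}$ (cf. \cite[Lemma 3.1]{Rap05}); by the definition of $B(G,\{\mu\})$ recalled above this is exactly $\mathrm{Im}(\delta_{K})\subseteq B(G,\{\mu\})$. Alternatively one avoids the crystalline input and deduces (2) from (1) via $\delta_{K}=d_{K}\circ\gamma_{K}$, using the group-theoretic input (due to He and to G\"{o}rtz--He--Nie, cf. \cite{GHN19}) that $d_{K}$ sends every $\sigma$-$\breve{K}$-conjugacy class lying in an admissible double coset $\breve{K}w\breve{K}$, $w\in\mathrm{Adm}(\{\mu\})_{K}$, into $B(G,\{\mu\})$.

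For (3), the inclusion $\mathrm{Im}(\gamma_{K})\subseteq\bigsqcup_{w\in\mathrm{Adm}(\{\mu\})_{K}}\breve{K}w\breve{K}/K_{\sigma}$ is a restatement of (1), since $l_{K}$ maps the right-hand side onto $\mathrm{Adm}(\{\mu\})_{K}$ and $\lambda_{K}=l_{K}\circ\gamma_{K}$. The reverse inclusion is the substantive assertion and the step I expect to be the main obstacle: it amounts to the nonemptiness of every EKOR stratum in the $\{\mu\}$-admissible range. In the axiomatic setting this is the surjectivity clause built into the He--Rapoport axiom defining $\gamma_{K}$; when one checks the axioms for an actual Shimura variety it is obtained by combining the nonemptiness of the Newton strata $\delta_{K}^{-1}([b])$, $[b]\in B(G,\{\mu\})$, with the almost-product structure of Newton strata and a specialization argument along the closure order on $\mathrm{Adm}(\{\mu\})_{K}$ (in the Hodge type case one may instead cite \cite{SYZ19}).

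In summary, in the axiomatic framework of the present note the whole theorem is a bookkeeping consequence of the diagram; the only ingredient that requires genuine geometric input beyond the axioms is the surjectivity half of (3), and granting it the three images are pinned down and the theorem follows.
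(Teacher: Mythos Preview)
The paper does not actually prove this theorem; it simply cites the relevant results from \cite{HR17}, remarking after the statement that (1) is \cite[Proposition 3.13(i)]{HR17}, (2) is \cite[Proposition 3.13(ii)]{HR17}, and (3) is \cite[Corollary 4.2]{HR17}. Your sketch is therefore more detailed than what the paper offers, and is broadly correct as an account of what lies behind those citations: (1) via the local-model axiom and the Schubert-cell description of $M^{\loc}_{\{\mu\}}$, (2) via Mazur's inequality plus the Kottwitz-invariant axiom (or, as you note, by factoring through (1)), and the easy inclusion in (3) from $\lambda_{K}=l_{K}\circ\gamma_{K}$.

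One point to flag: your treatment of the surjectivity half of (3) is slightly off in emphasis. You describe it as ``the surjectivity clause built into the He--Rapoport axiom defining $\gamma_{K}$,'' but in \cite{HR17} it is not an axiom; it is deduced (their Corollary 4.2) from the other axioms, in particular from the axiom on compatibility with change of parahoric and the nonemptiness of the basic Newton stratum, together with group-theoretic results of He on the image of $\breve{I}w\breve{I}$ under $d_{K}$. Your alternative description (``nonemptiness of Newton strata plus almost-product structure plus specialization'') is a different, later route; in the axiomatic framework the actual argument is more elementary and does not require the full Newton stratification to be nonempty. So your sketch is fine as an overview, but if you want to match the logical structure of \cite{HR17} you should not present surjectivity in (3) as axiomatic input.
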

We remark that $(1)$ is proved in \cite[Proposition 3.13 (i)]{HR17}. The map $\lambda_{K}$ is related to the map from $Sh_{K}$ to the stack $[M^{\mathrm{loc}}_{K}/\mathcal{G}_{K, k_{E}}]$ where $M^{\mathrm{loc}}_{K}$ is the special fiber of the local model of $Sh_{K}$.  In many cases, one can embed this special fiber in an affine flag variety which can be decomposed into Schubert cells which are indexed by $\breve{K}\backslash G(\breve{\QQ}_{p})/\breve{K}$, \cite{Gor01, Gor03}. It is expected that the set $\mathrm{Adm}(\{\mu\})_{K}$ is precisely the index set for those cells that could "see" the Shimura variety $Sh_{K}$. Therefore we define the \emph{Kottwitz-Rapoport stratum} associated to $w\in \Adm(\{\mu\})_{K}$ by 
\begin{equation}
KR_{K, w}=\lambda^{-1}_{K}(w).
\end{equation}
This is a locally closed subvariety of $Sh_{K}$.

Next $(2)$ is proved in  \cite[Proposition 3.13 (ii)]{HR17}. The map $\delta_{K}$ is related to the \emph{Newton stratification} and $B(G, \{\mu\})$ is the natural index set of the Newton stratification. We define the \emph{Newton stratum} associated to an element $[b]\in B(G, \{\mu\})$ by
\begin{equation}
S_{K,[b]}=\delta^{-1}_{K}([b]).
\end{equation}
This is a locally closed subvariety of $Sh_{K}$.

Finally $(3)$ is proved in \cite[Corollary 4.2]{HR17}. Let $\breve{K}_{1}$ be the pro-unipotent radical of $\breve{K}$. Then we have the following inclusions
\begin{equation*}
\breve{K}_{\sigma}\subset \breve{K}_{\sigma}(\breve{K}_{1}\times \breve{K}_{1})\subset \breve{K}\times \breve{K}. 
\end{equation*}
We consider the following composition of two maps
\begin{equation*}
\nu_{K}: Sh_{K}\rightarrow G(\breve{\QQ}_{p})/\breve{K}_{\sigma}\rightarrow  G(\breve{\QQ}_{p})/\breve{K}_{\sigma}(\breve{K}_{1}\times \breve{K}_{1})
\end{equation*}
where the first one is the map $\gamma_{K}$ and the second map is the natural projection map. We will need the following decomposition theorem of the group $G(\breve{\QQ}_{p})$. 
\begin{theorem}[{\cite[Theorem 6.1]{HR17}}]\label{decomp}
Let $K$ be a parahoric subgroup. Then
\begin{equation*}
G(\breve{\QQ}_{p})=\bigsqcup_{x\in ^{K}\tilde{W}}\breve{K}_{\sigma}(\breve{K}_{1}x\breve{K}_{1})=\bigsqcup_{x\in ^{K}\tilde{W}}\breve{K}_{\sigma}(\breve{I}x\breve{I}).
\end{equation*}
\end{theorem}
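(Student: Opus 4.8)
I would follow the strategy of \cite[\S 6]{HR17}, organizing the argument around Lang's theorem for the reductive quotient of $\breve{K}$ and the combinatorics of the extended affine Weyl group. Write $\breve{K}_{1}\trianglelefteq\breve{K}$ for the pro-unipotent radical, so that $\breve{K}_{1}\subseteq\breve{I}\subseteq\breve{K}$ are $\sigma$-stable, $\breve{K}/\breve{K}_{1}\cong\bar{L}(\FF)$ for a connected reductive group $\bar{L}$ over $\FF$ (the reductive quotient of the special fibre of the parahoric group scheme), with $\breve{I}/\breve{K}_{1}$ a Borel subgroup $\bar{B}(\FF)$ and $W_{K}=W(\bar{L},\bar{T})$. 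The first step is a formal reduction: writing an arbitrary element of $\breve{K}$ as $ki$ with $i\in\breve{I}$ and using the $\sigma$-stability of $\breve{I}$ and $\breve{K}_{1}$, one rewrites the pieces as $\breve{K}_{\sigma}(\breve{I}x\breve{I})=\{axb\,\sigma(a)^{-1}:a\in\breve{K},\ b\in\breve{I}\}$ and $\breve{K}_{\sigma}(\breve{K}_{1}x\breve{K}_{1})=\{axb\,\sigma(a)^{-1}:a\in\breve{K},\ b\in\breve{K}_{1}\}$. Since $\breve{K}_{1}\subseteq\breve{I}$, and since applying $S\mapsto\breve{K}_{\sigma}(S)$ to a union of $\sigma$-conjugacy classes returns that same set, the two asserted decompositions will have identical terms once one knows $\breve{I}x\breve{I}\subseteq\breve{K}_{\sigma}(\breve{K}_{1}x\breve{K}_{1})$ for every $x\in{}^{K}\tilde{W}$. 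Thus three things remain: this inclusion, the covering $\bigcup_{x\in{}^{K}\tilde{W}}\breve{K}_{\sigma}(\breve{I}x\breve{I})=G(\breve{\QQ}_{p})$, and the disjointness of the sets $\breve{K}_{\sigma}(\breve{I}x\breve{I})$.

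For the covering I would begin from the Iwahori--Bruhat decomposition $G(\breve{\QQ}_{p})=\bigsqcup_{w\in\tilde{W}}\breve{I}w\breve{I}$, together with $\breve{K}=\bigsqcup_{v\in W_{K}}\breve{I}v\breve{I}$ and the length additivity $l(vx)=l(v)+l(x)$ for $v\in W_{K}$ and $x\in{}^{K}\tilde{W}$, which give $\breve{K}x\breve{I}=\bigsqcup_{v\in W_{K}}\breve{I}vx\breve{I}$, hence $G(\breve{\QQ}_{p})=\bigsqcup_{x\in{}^{K}\tilde{W}}\breve{K}x\breve{I}$. It then suffices to show each $\breve{I}vx\breve{I}$ lies in $\breve{K}_{\sigma}(\breve{I}y\breve{I})$ for some $y\in{}^{K}\tilde{W}$, which I would do by conjugating away the $W_{K}$-part through a sequence of ``partial $\sigma$-conjugations'': if $v=sv'$ with $s\in K$ a simple reflection and $l(v')<l(v)$, then for $g=i_{1}\dot{s}i_{2}h\in\breve{I}s\breve{I}\cdot\breve{I}v'x\breve{I}$ the element $(\dot{s}^{-1}i_{1}^{-1})\,g\,\sigma(\dot{s}^{-1}i_{1}^{-1})^{-1}$ lies in $\breve{I}v'x\breve{I}\cdot\breve{I}\sigma(s)\breve{I}$; iterating this and tracking the $\breve{I}$-double coset by means of He's analysis of minimal-length elements under twisted conjugation by a standard parabolic subgroup \cite{He14}, one reaches a unique $\breve{I}y\breve{I}$ with $y\in{}^{K}\tilde{W}$. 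The inclusion $\breve{I}x\breve{I}\subseteq\breve{K}_{\sigma}(\breve{K}_{1}x\breve{K}_{1})$ belongs to the same circle of ideas: since $\breve{K}_{1}$ is pro-unipotent and $\bar{L}$ is connected, Lang's theorem lets one $\sigma$-conjugate any coset $b\breve{K}_{1}$, $b\in\breve{I}$, into $\breve{K}_{1}$ without moving the position $x$; here length additivity for $x\in{}^{K}\tilde{W}$ is what prevents the $\breve{K}_{1}$-factors from being pushed past $x$.

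For the disjointness the decisive point is that reduction modulo $\breve{K}_{1}$ turns the left action of $\breve{K}$ on $\breve{K}_{1}\backslash G(\breve{\QQ}_{p})/\breve{K}_{1}$ by $\sigma$-conjugation into an action of $\bar{L}(\FF)$ by twisted conjugation, under which $\breve{K}_{\sigma}(\breve{K}_{1}x\breve{K}_{1})$ is the preimage of a single orbit. If $g\in\breve{I}x\breve{I}$ and $a\in\breve{K}$ with $a\,g\,\sigma(a)^{-1}\in\breve{I}x'\breve{I}$, then $a\,g\,\sigma(a)^{-1}\in\breve{K}g\breve{K}=\breve{K}x\breve{K}$ already forces $W_{K}x'W_{K}=W_{K}xW_{K}$, and within this double coset the comparison of the Bruhat cells of $\bar{L}$ attached to $x$ and to $x'$, combined with Lang's theorem, forces $x=x'$.

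The hard part will be precisely the bookkeeping I have suppressed: that the chain of partial $\sigma$-conjugations in the covering step terminates at a well-defined element of ${}^{K}\tilde{W}$, and that the twisted-conjugacy orbits of $\bar{L}(\FF)$ on $\breve{K}_{1}\backslash G(\breve{\QQ}_{p})/\breve{K}_{1}$ separate the $x\in{}^{K}\tilde{W}$ in the disjointness step. Both are handled by the combinatorics of \cite{He14}, and I would essentially reproduce the treatment of \cite[\S 6]{HR17}.
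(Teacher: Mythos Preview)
The paper does not give its own proof of this statement: it is stated as \cite[Theorem 6.1]{HR17} and simply quoted, with no argument supplied. Your proposal to reproduce the treatment of \cite[\S 6]{HR17} is therefore exactly in line with what the paper does, namely defer entirely to that reference; there is nothing further to compare.
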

Combing the previous theorems, we obtain the following lemma.
\begin{lemma}
\begin{equation*}
\mathrm{Im}(\nu_{K})\subset \bigsqcup_{w\in\Adm(\{\mu\})^{K}\cap {^{K}\tilde{W}}} \breve{K}_{\sigma}(\breve{K}_{1}w\breve{K}_{1})/\breve{K}_{\sigma}(\breve{K}_{1}\times \breve{K}_{1}).
\end{equation*}
\end{lemma}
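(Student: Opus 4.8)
The plan is to deduce the lemma purely formally by combining the admissibility of $\mathrm{Im}(\lambda_{K})$ from Theorem~\ref{im}(1) with the refined decomposition of $G(\breve{\QQ}_{p})$ from Theorem~\ref{decomp}. The point is that a single element $g\in G(\breve{\QQ}_{p})$ carries two pieces of combinatorial data: its ordinary double coset $\breve{K}g\breve{K}$, recorded by $l_{K}$ and hence by $\lambda_{K}$, and the unique $x\in{}^{K}\tilde{W}$ with $g\in\breve{K}_{\sigma}(\breve{K}_{1}x\breve{K}_{1})$, recorded by the projection $\pi\colon G(\breve{\QQ}_{p})/\breve{K}_{\sigma}\to G(\breve{\QQ}_{p})/\breve{K}_{\sigma}(\breve{K}_{1}\times\breve{K}_{1})$ and hence by $\nu_{K}=\pi\circ\gamma_{K}$. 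So the whole proof amounts to checking that these two data are compatible, namely that the images of $\breve{K}g\breve{K}$ and of $x$ in $W_{K}\backslash\tilde{W}/W_{K}$ coincide.

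First I would fix a point $z$ of $Sh_{K}$ and an element $g\in G(\breve{\QQ}_{p})$ representing $\gamma_{K}(z)$. By Theorem~\ref{decomp} there is a unique $x\in{}^{K}\tilde{W}$ with $g\in\breve{K}_{\sigma}(\breve{K}_{1}x\breve{K}_{1})$, and by definition of $\nu_{K}$ the value $\nu_{K}(z)$ is precisely the image of this orbit in $G(\breve{\QQ}_{p})/\breve{K}_{\sigma}(\breve{K}_{1}\times\breve{K}_{1})$; so it is enough to prove $x\in\Adm^{K}(\{\mu\})$ (the set written $\Adm(\{\mu\})^{K}$ in the statement). The key observation is that $\breve{K}_{\sigma}(\breve{K}_{1}x\breve{K}_{1})\subset\breve{K}x\breve{K}$: the base alcove and $K$ are $\sigma$-invariant, so $\breve{K}$ is $\sigma$-stable and $\sigma(k)\in\breve{K}$ for all $k\in\breve{K}$, while $\breve{K}_{1}\subset\breve{K}$, whence every element $k k_{1}x k_{2}^{-1}\sigma(k)^{-1}$ of the orbit lies in $\breve{K}x\breve{K}$. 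In particular $g\in\breve{K}x\breve{K}$.

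On the other hand, by the commutative diagram $\lambda_{K}=l_{K}\circ\gamma_{K}$, so $\lambda_{K}(z)$ is the class of the double coset $\breve{K}g\breve{K}$, and Theorem~\ref{im}(1) gives $\lambda_{K}(z)\in\Adm(\{\mu\})_{K}=W_{K}\backslash\Adm^{K}(\{\mu\})/W_{K}$. Under the standard identification $\breve{K}\backslash G(\breve{\QQ}_{p})/\breve{K}\cong W_{K}\backslash\tilde{W}/W_{K}$ this says that the double coset of $g$ meets $\Adm^{K}(\{\mu\})$, and since $\Adm^{K}(\{\mu\})=W_{K}\Adm(\{\mu\})W_{K}$ is a union of full $W_{K}$-double cosets, it is contained in it. Since the ordinary $\breve{K}$-double cosets partition $G(\breve{\QQ}_{p})$ and $g$ lies in both $\breve{K}x\breve{K}$ and $\breve{K}g\breve{K}$, these two double cosets coincide; therefore $W_{K}x W_{K}\subset\Adm^{K}(\{\mu\})$, and in particular $x\in\Adm^{K}(\{\mu\})$. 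Combined with $x\in{}^{K}\tilde{W}$ this places $\nu_{K}(z)$ in the term of the right-hand union indexed by $x$; as $z$ was arbitrary this proves the inclusion, the disjointness of the union being part of Theorem~\ref{decomp}.

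The one genuinely substantive point — and the place I would be most careful — is the inclusion $\breve{K}_{\sigma}(\breve{K}_{1}x\breve{K}_{1})\subset\breve{K}x\breve{K}$, together with the identification of $\breve{K}\backslash G(\breve{\QQ}_{p})/\breve{K}$ with $W_{K}\backslash\tilde{W}/W_{K}$ used to compare $\Adm(\{\mu\})_{K}$ with the index set of Theorem~\ref{decomp}. Both rest on Bruhat--Tits theory for the parahoric $\breve{K}$ and on the $\sigma$-invariance built into the choice of base alcove; once they are in hand, the lemma is a formal consequence of Theorems~\ref{im} and~\ref{decomp} via the commutative diagram, and the rest is bookkeeping.
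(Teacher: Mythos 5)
Your proof is correct and is essentially the argument that the paper's one-line proof (``follows from Theorem \ref{im} and Theorem \ref{decomp}'') has in mind: you correctly identify the two combinatorial invariants attached to $g$ via $\gamma_K$, observe $\breve{K}_{\sigma}(\breve{K}_{1}x\breve{K}_{1})\subset\breve{K}x\breve{K}$ using the $\sigma$-stability of $\breve{K}$ and $\breve{K}_1\subset\breve{K}$, and conclude that the $x\in{}^{K}\tilde{W}$ produced by Theorem \ref{decomp} lies in the $W_K$-double coset that Theorem \ref{im}(1) confines to $\Adm^K(\{\mu\})$. The only cosmetic difference is that you invoke part (1) ($\mathrm{Im}(\lambda_K)\subset\Adm(\{\mu\})_K$) where one could equally use the stronger part (3); both give what is needed.
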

\begin{proof}
The lemma follows from Theorem \ref{im} and Theorem \ref{decomp}.
\end{proof}
 We in fact the following fact \cite[Theorem 6.1]{He16}
 \begin{equation}
  \Adm(\{\mu\})\cap {^{K}\tilde{W}}=\Adm(\{\mu\})^{K}\cap {^{K}\tilde{W}}. 
 \end{equation}
 For an element $w\in \Adm(\{\mu\})\cap {^{K}\tilde{W}}$, we define the \emph{EKOR stratum} of $Sh_{K}$ by the following subset
\begin{equation}\label{EKOR-stra}
EKOR_{K, w}= \nu^{-1}_{K}(\breve{K}_{\sigma}(\breve{I} w\breve{I})/\breve{K}_{\sigma})\subset Sh_{K}. 
\end{equation}
Notice that if ${K}={I}$ is the Iwahori subgroup, then $\Adm(\{\mu\})\cap {^{K}\tilde{W}}=\Adm(\{\mu\})$ and 
\begin{equation*}
EKOR_{I, w}=KR_{I, w}.
\end{equation*} 

On the other hand, suppose $G$ is unramified over $\QQ_{p}$ and we let $K=G(\ZZ_{p})$. Then consider the following set \cite{Vie14}
\begin{equation}
\mathcal{T}=\{(w, \mu)\in W\times X_{*}(T)^{+}: w\in {^{\mu}W}\}.
\end{equation} 
Here $^{\mu}W$ means the minimal length representatives in the coset space $W_{\mu}\backslash W$ with $W_{\mu}$ the parabolic subgroup determined by $\mu$. 

\begin{example}\label{Siegel}
Suppose $G=\GSp_{2g}$ and $\mu=(1^{(g)}, 0^{(g)})$. Then $W_{\mu}=W_{J}$ with $J=\{s_{1}, \cdots, s_{g}\}-\{s_{g}\}$. One can even identify $W_{\mu}$ with $S_{g}$ the symmetric group with $g$ letters. Then $^{\mu}W$ is exactly the index set for the \emph{Ekedhal-Oort stratification} on $\mathcal{A}_{g}$ the moduli space of principal polarized abelian schemes of dimension $g$. The strata classify the isomorphism classes of the $p$-torsion subgroups of the abelian schemes on $\mathcal{A}_{g}$. 
\end{example}
The main result of \cite{Vie14} gives us a bijection between $\mathcal{T}$ and the set of $\sigma$-$\breve{K}$-conjugacy classes of $\breve{K}_{1}\backslash G(\breve{\QQ}_{p})/\breve{K}_{1}$ and author denotes this bijection by
\begin{equation}
tr: G(\breve{\QQ}_{p})/\breve{K}_{\sigma}(\breve{K}_{1}\times \breve{K}_{1})\rightarrow \mathcal{T}.
\end{equation}
Now consider the map $\nu_{K}$ in this setting, as we have known the image is contained in  
\begin{equation}\label{EKOR}
\bigsqcup_{w\in\Adm(\{\mu\})^{K}\cap {^{K}\tilde{W}}} \breve{K}_{\sigma}(\breve{K}_{1}w\breve{K}_{1})/\breve{K}_{\sigma}(\breve{K}_{1}\times \breve{K}_{1})\end{equation}
and by the definition of $\Adm(\{\mu\})^{K}$ the restriction of $tr$ to
$$\bigsqcup_{w\in\Adm(\{\mu\})^{K}\cap {^{K}\tilde{W}}} \breve{K}_{\sigma}(\breve{K}_{1}w\breve{K}_{1})/\breve{K}_{\sigma}(\breve{K}_{1}\times \breve{K}_{1})$$ is mapped to $^{\mu}W$. Therefore in this case we call the EKOR stratum indexed by $w$ simply the \emph{Ekedahl-Oort stratum}  (\emph{EO} stratum) indexed by $w$ and we will write
\begin{equation}
EO_{w}=EKOR_{K,w}.
\end{equation}
From the above discussions, we see that the EKOR stratification indeed interpolates the KR stratification and the Ekedahl-Oort stratification. 

\section{Affine Deligne-Lusztig varieties}
\subsection{Deligne-Lusztig varieties} Before we discuss affine Deligne-Lusztig varieties, we first introduce classical Deligne-Lusztig varieties. In this subsection, we will abuse the notations and denote by $G$ a connected reductive algebraic group over $\FF$ which is an algebraic closure of a finite field $\FF_{q}$ with $q$ a power of $p$. Let $B$ be a Borel subgroup of $G$ over $\FF_{q}$ with a Levi decomposition $B=TU$ which is defined over $\FF_{q}$. Let $W=N_{T}(\FF)/T(\FF)$ be the associated Weyl group with an action of $\sigma$. We denote by $\mathbb{S}$ the set of simple reflections generating $W$. Let $w\in W$ and we denote by $\mathrm{supp}(w)$ the support of $w$ which is the set of simple reflections that occur in some reduced expression of $w$.  An element $w\in W$ is called a $\sigma$-\emph{Coxeter} element if $w$ is a product of simple reflections, each of which belongs to a unique $\sigma$-orbit of $\mathbb{S}$. 

Recall we have the Bruhat decomposition 
\begin{equation*}
G=\bigsqcup_{w\in W}BwB. 
\end{equation*}
Then the \emph{Deligne-Lusztig variety} associated to $w\in W$ is defined to be 
\begin{equation*}
Y(w)=\{gB\in G/B: g^{-1}\sigma(g)\in BwB/B\}.
\end{equation*}
Here $\sigma$ is the Frobenius acting on $G$ and $X(w)$ is considered as a locally closed subvariety of the flag variety $G/B$. 

Let $J\subset \mathbb{S}$. Let $W_{J}$ be the corresponding parabolic subgroup of $W$ and $P_{J}$ be corresponding parabolic subgroup of $G$. Recall that ${^{J}W}$ is the set of minimal length representatives of $W_{J}\backslash W$. For any $w\in {^{J}W}$, we set 
\begin{equation*}
Y_{J}(w)= \{gP_{J}: g^{-1}\sigma(g)\in {P_{J}}_{\sigma}(BwB)\}.
\end{equation*}
This is called the \emph{fine Deligne-Lusztig variety}. 

\subsection{Affine Deligne-Lusztig varities} Now we move back to the set-up in \S\ref{preli}. In particular, we assume for simplicity that $G$ is a connected reductive group over $\QQ_{p}$. Let $b\in G(\breve{\QQ}_{p})$ be a representative of the $\sigma$-conjugacy class $[b]$. Let $K\subset \tilde{\mathbb{S}}$ and $\breve{K}\subset G(\breve{\QQ})$ be the corresponding parahoric subgroup. Then the \emph{affine Deligne Lusztig variety} is given by the set
\begin{equation*}
X_{w}(b)_{K}=\{g\breve{K}: g^{-1}b\sigma(g)\in \breve{K}w\breve{K}\}.
\end{equation*}
This the $\FF$-points of a locally closed subscheme of the $p$-adic partial affine flag variety $G(\breve{\QQ}_{p})/\breve{K}$, \cite{BS17, Zhu17}.  Note that the group of $\sigma$-centralizer 
\begin{equation}
J_{b}=\{g\in G(\breve{\QQ}_{p}): g^{-1}b\sigma(g)=b\}
\end{equation}
acts naturally on $X_{w}(b)_{K}$. 

It is natural to consider the following variant of the above set which we will call the \emph{generalized affine Deligne-Lusztig variety}
\begin{equation*}
X(\mu, b)_{K}=\{g\breve{K}: g^{-1}b\sigma(g)\in \bigcup_{w\in \Adm^{K}(\{\mu\})}\breve{K}w\breve{K} \}.
\end{equation*}
It is obvious we have a natural decomposition
\begin{equation}\label{decomp-adlv}
X(\mu, b)_{K}=\bigcup_{w\in \Adm^{K}(\{\mu\})} X_{w}(b)_{K}. 
\end{equation} 
We also need the notion of a \emph{fine Deligne-Lusztig variety}. Let $w\in {^{K}\tilde{W}}$, we define 
\begin{equation*}
X_{K,w}(b)=\{g\breve{K}: g^{-1}b\sigma(g)\in \breve{K}_{\sigma}(\breve{I}w\breve{I})\}.
\end{equation*}
The decomposition in \eqref{decomp-adlv} can be made finer as in \cite[3.4]{GH15} and we have
\begin{equation*}
X(\mu, b)_{K}=\bigsqcup_{w\in {^{K}\tilde{W}}\cap\Adm^{K}(\{\mu\})} X_{K, w}(b).
\end{equation*}
In light of the EKOR stratifications of the Shimura varieties introduced in \eqref{EKOR-stra} and relation between the generalized affine Deligne-Lusztig variety and the Rapoport-Zink space that we will discuss later. We will refer to this decomposition of the generalized Deligne-Lusztig variety as the EKOR stratification of $X(\mu, b)_{K}$. The affine Deligne-Lusztig variety shows up in the stratification of Shimura varieties in the following explicit way, \cite[6.2]{GHN19}. We will restrict the map $\gamma_{K}$ to the Newton stratum $S_{K,[b]}=\delta^{-1}_{K}([b])$ and obtain
\begin{equation}\label{Newton-ADLV}
\gamma_{K}(S_{k,[b]})=d^{-1}_{K}([b])\cap l^{-1}_{K}(\Adm^{K}(\{\mu\}))=J_{b}\backslash X(\mu, b)_{K}. 
\end{equation}

\section{The basic EKOR strata} 
\subsection{Basic Newton Stratum} Recall the Langlands-Rapoport conjecture \cite{LR87, Rap05} asserts the following decomposition of the point of $Sh_{K}$ valued in $\FF$
\begin{equation}
\bigsqcup_{\phi}I_{\phi}(\QQ)\backslash X_{p}(\phi)\times X^{p}(\phi)/K^{p} \xrightarrow{\cong} Sh_{K}(\FF).
\end{equation}
We will not recall the general form for this conjecture. But if the Shimura variety is a moduli space of abelian varieties, then $\phi$ is supposed to run through the isogeny classes on the moduli space. The space $X_{p}(\phi)$ is supposed to correspond to the part coming from the $p$-power isogenies and $X^{p}(\phi)$ should correspond to the prime to $p$-part of the isogenies. In this case $I_{\phi}$ is the algebraic group over $\QQ$ corresponding to the automorphism group of an abelian variety in the isogeny class. Furthermore $X_{p}(\phi)$ should be given by a suitable affine Deligne-Lusztig variety. Recall that we have a fixed element $\tau$ in the basic class in $B(G, \mu)$. We consider now the associated basic Newton stratum $S_{K, [\tau]}$. In light of the above discussion on the Langlands-Rapoport conjecture, we postulate the following axiom in this note. 
\begin{axiom}\label{uniformize}
The basic Newton stratum is uniformized by the affine Deligne-Lusztig variety under a uniformization map
\begin{equation}
\iota_{K}: X(\mu, \tau)_{K}\rightarrow S_{K, [\tau]} 
\end{equation}
which induces an isomorphism of the form
\begin{equation}\label{uniform}
I_{\tau}(\QQ)\backslash X(\mu, \tau)_{K}\times X^{p}(\tau)/K^{p}\xrightarrow{\cong} S_{K, [\tau]}
\end{equation}
where $X^{p}(\tau)$ is a $G(\mathbb{A}^{p}_{f})$-torsor and $I_{\tau}$ is an algebraic group over $\QQ$ that acts on $X(\mu, \tau)_{K}$ through a map $I_{\tau}(\QQ)\rightarrow J_{\tau}(\QQ_{p})$. In addition, we require that the the composite of the map 
\begin{equation*}
\gamma_{K}\circ\iota_{K}: X(\mu, \tau)_{K}\rightarrow S_{K,[\tau]}\rightarrow J_{\tau}\backslash X(\mu, \tau)_{K}
\end{equation*}
is the natural map. 
\end{axiom}

In the case for the Hodge type Shimura varieties, the uniformization map is constructed in \cite{Kis17} in the unramified case and in \cite{Zhou19} for the parahoric case. Another way to postulate this is to require that there is an isomorphism of perfect schemes
\begin{equation*}
\mathcal{M}(G, \mu, \tau)^{per}_{K}\cong X(\mu, \tau)_{K} 
\end{equation*}
where $\mathcal{M}(G, \mu, b)^{per}_{K}$ is the perfection of the local Shimura variety (Rapoport-Zink space) \cite{RV14} that could be used to uniformize the basic stratum in an analogues way as in \eqref{uniform}, see \cite[(5.4)]{RV14}.

\subsection{The basic EKOR stratum} From here on, we will be concerned with the \emph{basic EKOR strata}. In other words, we will be concerned with those $w\in \Adm(\{\mu\})\cap {^{K}\tilde{W}}$ such that
\begin{equation*}
EKOR_{K, w}\subset S_{K,[\tau]}.
\end{equation*}
For $w\in W_{a}$, we denote by $\supp(w)$ the support of $w$, the set of $s_{i}\in \tilde{\mathbb{S}}$ that appears in some reduced expression of $w$. We also define  
\begin{equation*}
\supp_{\sigma}(w\tau)=\bigcup_{n\in \ZZ} (\tau\sigma)^{n}(\supp(w)).
\end{equation*}
We have the following numerical criterion to characterize the basic EKOR strata.
\begin{proposition}[{\cite[Proposition 5.6]{GHN19}}]\label{basic-EKOR}
Let $w\in \Adm(\{\mu\})\cap {^{K}\tilde{W}}$, then $EKOR_{K, w}\subset S_{K,[\tau]}$ if and only if $W_{\supp_{\sigma}(w)}$ is finite.
\end{proposition}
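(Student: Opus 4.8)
The plan is to reduce the statement to a criterion about when a straight $\sigma$-conjugacy class in $\tilde{W}$ meeting $\Adm(\{\mu\})$ is the basic one, and then to identify this with the finiteness of $W_{\supp_\sigma(w)}$. The starting point is the description in \eqref{Newton-ADLV}: for $w \in \Adm(\{\mu\}) \cap {^K\tilde W}$, the EKOR stratum $EKOR_{K,w}$ lies in $S_{K,[b]}$ precisely when the fine affine Deligne-Lusztig variety $X_{K,w}(b)$ is nonempty, and $EKOR_{K,w} \subset S_{K,[\tau]}$ exactly when $X_{K,w}(\tau) \neq \emptyset$ while $X_{K,w}(b) = \emptyset$ for all non-basic $[b] \in B(G,\{\mu\})$. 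So the first step is to translate the question "$EKOR_{K,w} \subset S_{K,[\tau]}$" into "the only $[b] \in B(G)$ with $X_{K,w}(b) \neq \emptyset$ is $[\tau]$".

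The second step is to bring in the theory of $\sigma$-straight elements. Write $w = w_0\tau$ with $w_0 \in W_a$; the key input (from He's work, as recorded around Theorem \ref{W-B} and its refinement) is that the set of $[b]$ with $X_{K,w}(b) \neq \emptyset$ is governed by the straight $\sigma$-conjugacy classes that can be reached from $w$ by cyclic shift / elementary operations inside $\breve{I}$-double cosets, and that for $w \in {^K\tilde W} \cap \Adm(\{\mu\})$ the relevant class is the unique minimal one in its $\sigma$-conjugacy class. Concretely, the nonemptiness of $X_{K,w}(b)$ for $b$ running over $B(G,\{\mu\})$ is controlled by the Newton vector $\bar\nu_w$: one has $X_{K,w}(\tau) \neq \emptyset$ always (since $w \in \Adm(\{\mu\})$), and $X_{K,w}(b) = \emptyset$ for all non-basic $[b]$ iff the $\sigma$-conjugacy class of $w$ in $\tilde W$ is itself basic, i.e. $\bar\nu_w$ is central. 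So the claim becomes: $\bar\nu_w$ is central $\iff W_{\supp_\sigma(w)}$ is finite.

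The third and most technical step is this last equivalence, which is essentially a combinatorial fact about the affine Weyl group. Here one uses that $\supp_\sigma(w) = \bigcup_{n\in\ZZ}(\tau\sigma)^n(\supp(w_0))$ is a $\langle\tau\sigma\rangle$-stable subset of $\tilde{\mathbb S}$, and that the parabolic subgroup $W_{\supp_\sigma(w)}$ of $W_a$ it generates is finite if and only if $\supp_\sigma(w)$ is contained in no connected component of the affine Dynkin diagram — equivalently misses at least one simple affine reflection in each component. One then argues: if $W_{\supp_\sigma(w)}$ is finite, then $w\sigma$ (up to $\sigma$-conjugacy) lives inside a finite parabolic twisted by $\tau$, forcing all powers $(w\sigma)^n$ to have bounded translation part, hence $\bar\nu_w = 0$ on the derived group, i.e. central. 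Conversely, if $\bar\nu_w$ is central then some power $(w\sigma)^n = t^\lambda$ with $\lambda$ central, which constrains $w_0$ to lie in a finite parabolic, so $W_{\supp_\sigma(w)}$ is finite. I expect the main obstacle to be this third step — specifically, the careful bookkeeping linking the Newton vector of $w$ (defined via $(w\sigma)^n = t^\lambda$) to the support data, and making sure the $\sigma$-twist and the $\tau$-twist are handled correctly so that "finite parabolic" matches "central Newton vector". I would cite \cite{He14} and \cite{GHN19} for the precise forms of the straightness/Newton-vector dictionary rather than reprove them, and present this third step as the genuine content.
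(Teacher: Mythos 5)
The paper's own proof is essentially a one-liner: it translates $EKOR_{K,w}\subset S_{K,[\tau]}$ into the group-theoretic containment $\breve K_\sigma(\breve Iw\breve I)\subset[\tau]$, cites \cite[Prop.~5.6]{GHN19} verbatim for the equivalence with ``$W_{\supp_\sigma(w)}$ finite and $\kappa_{\tilde W}(w)=\kappa_G(\tau)$'', and then checks that the Kottwitz condition is automatic for $w\in\Adm(\{\mu\})$. You instead try to re-derive the criterion through the Newton vector $\bar\nu_w$, and that is where the argument breaks.

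The gap is in your step 2: the equivalence ``$X_{K,w}(b)=\emptyset$ for all non-basic $[b]$ iff $\bar\nu_w$ is central'' is false. The Newton vector $\bar\nu_w$ of $w$, defined via $(w\sigma)^n=t^\lambda$, is the Newton point of the $\sigma$-conjugacy class $[\dot w]$ of a single lift of $w$; it is \emph{not} the generic Newton point of the whole double coset $\breve Iw\breve I$ (nor of $\breve K_\sigma(\breve Iw\breve I)$), and these agree only when $w$ is $\sigma$-straight. Admissible elements in ${}^K\tilde W$ need not be $\sigma$-straight, so $\breve Iw\breve I$ can meet strictly larger classes than $[\dot w]$. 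A minimal example: in $\tilde A_1$ take $w=s_0s_1s_0$. This is a reflection, so $(w)^2=1$ and $\bar\nu_w=0$ is central; yet $\supp_\sigma(w)=\{s_0,s_1\}$ generates the whole affine Weyl group, so $W_{\supp_\sigma(w)}$ is infinite, and indeed $\breve Is_0s_1s_0\breve I$ meets the non-basic class of the straight element $s_0s_1=t^{\alpha^\vee}$. So your ``$\Leftarrow$'' direction of step 3 (``$\bar\nu_w$ central implies $W_{\supp_\sigma(w)}$ finite'') cannot be proved — it is simply not true — and the reduction in step 2 is what would have to do the real work, via the generic-Newton-point/maximality machinery of He and He--Nie applied to the correct object ($\breve K_\sigma(\breve Iw\breve I)$, not the element $w$). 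You also drop the Kottwitz condition $\kappa_{\tilde W}(w)=\kappa_G(\tau)$ from GHN19's criterion; the paper notes this is automatic for $w\in\Adm(\{\mu\})$, but it still needs to be said. Your claim that $X_{K,w}(\tau)\neq\emptyset$ always is likewise not a free-standing group-theoretic fact and would need justification.
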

\begin{proof}
Note that $EKOR_{K, w}\subset S_{K,[\tau]}$ if and only if $\breve{K}_{\sigma}(\breve{I}w\breve{I})\subset [\tau]$.  By \cite[Proposition 5.6]{GHN19}, this happens if and only if $W_{\supp_{\sigma}(w)}$ is finite and $\kappa_{\tilde{W}}(w)=\kappa_{G}(\tau)$. Since $w\in \Adm(\{\mu\})$, it follows that $w\leq t^{w_{0}(\mu)}$ for some finite Weyl group element $w_{0}$ and $\kappa_{\tilde{W}}(w)=\kappa_{\tilde{W}}(t^{w_{0}(\mu)})=\kappa_{\tilde{W}}(w_{0}t^{\mu}w^{-1}_{0})=\kappa_{G}(\tau)$. 
\end{proof}

Let $w\in \tilde{W}$ and $K\subset \tilde{\mathbb{S}}$. We write $\mathrm{Ad}(w)\sigma(K)=K$ if  for any $s_{k}\in K$ there exists $s_{k^{\prime}}\in K$ with $w\sigma(s_{k})w^{-1}=s_{k^{\prime}}$. The set of $\{K^{\prime}\subset K: \mathrm{Ad}(w)\sigma(K^{\prime})\subset K^{\prime}\}$ contains a unique maximal element \cite[3.1]{GH15} which we denote by 
\begin{equation}\label{setI}
I(K, w, \sigma)= \mathrm{max} \{K^{\prime}\subset K: \mathrm{Ad}(w)\sigma(K^{\prime})\subset K^{\prime}\}.
\end{equation}

\begin{proposition}[{\cite[Proposition 5.7]{GHN19}}] 
Let $K$ be a $\sigma$-invariant subset of $\tilde{\mathbb{S}}$ and $w\in {^{K}\tilde{W}}$. If $W_{\supp_{\sigma}(w)}$ is finite, then $W_{\supp_{\sigma}(w)\cup I(K, w, \sigma)}$ is also finite. 
\end{proposition}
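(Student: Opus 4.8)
The plan is to exploit the defining maximality property of $I(K,w,\sigma)$ together with the structure of $\supp_\sigma(w)$ as a union over the cyclic group generated by $\mathrm{Ad}(\tau)\sigma$. Set $J=\supp_\sigma(w)\cup I(K,w,\sigma)$; I want to show $W_J$ is finite, knowing that $W_{\supp_\sigma(w)}$ is finite and (by hypothesis) $W_{I(K,w,\sigma)}$ is finite, since $I(K,w,\sigma)\subset K$ and $K$ is a proper (parahoric) subset so $W_K$ itself is finite — wait, that is not automatic. The first thing I would pin down is exactly which finiteness inputs are available: $W_K$ need not be finite for $K$ a parahoric subset, but $W_{I(K,w,\sigma)}$ being finite is \emph{not} part of the hypotheses either; only $W_{\supp_\sigma(w)}$ is assumed finite. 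So the real content is: enlarging $\supp_\sigma(w)$ by the $\mathrm{Ad}(w)\sigma$-stable set $I(K,w,\sigma)$ keeps the generated group finite.

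The key structural step I would carry out is to analyze the interaction of $w$ (equivalently $w\sigma$, acting by conjugation on $\tilde{\mathbb{S}}$) with these two sets. First, $\supp_\sigma(w)$ is by construction stable under $\mathrm{Ad}(w)\sigma$ (it is a union of $(\mathrm{Ad}(\tau)\sigma)$-orbits, and here one must be careful to reconcile the two flavors of $\supp_\sigma$ in the excerpt — I would use the fact that $w\in W_a\tau$, so conjugation by $w$ and by $\tau$ agree up to the $W_a$-part, and the support is already closed under the relevant twisted conjugation). Second, $I(K,w,\sigma)$ is by definition the maximal subset of $K$ stable under $\mathrm{Ad}(w)\sigma$. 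Thus $J$ is a union of two $\mathrm{Ad}(w)\sigma$-stable sets, hence itself $\mathrm{Ad}(w)\sigma$-stable. Now I would invoke the combinatorial criterion for finiteness of a standard parabolic of an affine Weyl group: $W_{J'}$ is finite iff $J'$ contains no full "affine" subset, i.e. $J'$ does not contain the complete set of simple reflections of any irreducible factor of $W_a$. Equivalently, $J'$ is finite-type iff for every connected component $C$ of the Dynkin diagram of $\tilde{\mathbb{S}}$, $J'\cap C \neq C$.

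With this criterion in hand, the argument would run by contradiction: suppose $W_J$ is infinite. Then $J$ contains the full vertex set of some irreducible affine component $C$ of the Coxeter diagram of $W_a$. Since $J$ is $\mathrm{Ad}(w)\sigma$-stable and $\mathrm{Ad}(w)\sigma$ permutes the connected components of $\tilde{\mathbb{S}}$, the orbit of $C$ under $\mathrm{Ad}(w)\sigma$ is a union of full affine components all contained in $J$; let $J_0\subset J$ be that orbit, which is $\mathrm{Ad}(w)\sigma$-stable with $W_{J_0}$ infinite. Now I would split $J_0$ according to the partition $J = \supp_\sigma(w)\cup I(K,w,\sigma)$: set $A = J_0\cap \supp_\sigma(w)$ and note $J_0\subset A\cup K$. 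The part of $J_0$ lying in $K$ is $\mathrm{Ad}(w)\sigma$-stable-within... — here is the delicate point — it need not be $\mathrm{Ad}(w)\sigma$-stable as a subset of $K$ on its own. I would instead argue: since $W_{\supp_\sigma(w)}$ is finite, $A$ omits at least one vertex from each affine component it meets, so $J_0\setminus A$ contains, for each such component, a vertex — and I would then use that $J_0\setminus A\subset K$ together with the fact that $J_0$ is $\mathrm{Ad}(w)\sigma$-stable to show that the $\mathrm{Ad}(w)\sigma$-saturation of $J_0\setminus A$ inside $K$ is a $\mathrm{Ad}(w)\sigma$-stable subset of $K$ that, by maximality, lies in $I(K,w,\sigma)$; combined with $A\subset\supp_\sigma(w)$ this forces $J_0\subset \supp_\sigma(w)\cup I(K,w,\sigma)$ in a way that lets me relocate the full affine component either entirely into $\supp_\sigma(w)$ (contradicting its finiteness) — this last bookkeeping is exactly where the proof's weight sits.

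The main obstacle I anticipate is precisely this last step: controlling how a full irreducible affine subdiagram, assumed to sit inside $J=\supp_\sigma(w)\cup I(K,w,\sigma)$ and to be $\mathrm{Ad}(w)\sigma$-stable as a whole, must already sit inside one of the two pieces. The clean way around it is likely the one used in \cite[Proposition 5.7]{GHN19}: rather than partitioning, argue that $\supp_\sigma(w)\cup I(K,w,\sigma)$ is itself of the form $I(K',w,\sigma)$-type data — i.e. show directly that if $W_{\supp_\sigma(w)}$ is finite then $\supp_\sigma(w)\cup I(K,w,\sigma)$ is again $\mathrm{Ad}(w)\sigma$-stable \emph{and} still "not affine" by observing that $\supp_\sigma(w)$ being finite means $w$ (hence $w\sigma$) has a fixed vertex / misses a node in a strong enough sense that is inherited. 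I would therefore structure the write-up as: (1) recall the finite-type criterion for parabolics of $W_a$; (2) show $J$ is $\mathrm{Ad}(w)\sigma$-stable; (3) reduce to a single $\mathrm{Ad}(w)\sigma$-orbit of affine components; (4) derive the contradiction from finiteness of $W_{\supp_\sigma(w)}$ via the relocation argument, citing \cite[Proposition 5.7]{GHN19} for the combinatorial core if the bookkeeping becomes unwieldy.
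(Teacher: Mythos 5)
The paper does not actually prove this proposition; it is quoted verbatim from G\"ortz--He--Nie and invoked as a black box, so there is no ``paper's own proof'' to compare your sketch against. With that caveat, let me evaluate the sketch on its own terms.

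Your high-level plan is reasonable: use the criterion that a standard parabolic subgroup $W_{J'}$ of the affine Weyl group is finite if and only if $J'$ meets every connected component of the affine Dynkin diagram in a proper subset, and then argue by contradiction. The criterion is correct and is a natural tool here. However, there are two genuine gaps. First, your step (2) --- that $J = \supp_\sigma(w) \cup I(K,w,\sigma)$ is $\mathrm{Ad}(w)\sigma$-stable --- is not justified and is not clearly true. By construction, $\supp_\sigma(w)$ is stable under $\mathrm{Ad}(\tau)\sigma$, where $\tau$ is the length-$0$ part of $w$; but $\mathrm{Ad}(w)\sigma$ and $\mathrm{Ad}(\tau)\sigma$ differ by conjugation by the $W_a$-part $w_0 = w\tau^{-1}$, and conjugation by $w_0$ does not in general preserve $\tilde{\mathbb{S}}$ (even if it preserves the set of reflections of $W_{\supp_\sigma(w)}$). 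So $\mathrm{Ad}(w)\sigma(s)$ for $s\in\supp_\sigma(w)$ may not be a simple reflection at all, and the ``two $\mathrm{Ad}(w)\sigma$-stable sets, hence a stable union'' step collapses. The hypothesis $w\in{}^{K}\tilde{W}$ does impose constraints that might rescue a related statement, but you neither invoke it nor explain how it enters; as written, the key structural claim is unsupported.

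Second, you yourself flag that the final ``relocation argument'' --- forcing a full affine component inside $\supp_\sigma(w)\cup I(K,w,\sigma)$ to lie entirely inside $\supp_\sigma(w)$ --- is where the weight of the proof sits, and you propose, if the bookkeeping becomes unwieldy, to ``cite \cite[Proposition 5.7]{GHN19} for the combinatorial core.'' But that reference \emph{is} the statement to be proved, so this is circular. The intermediate remark that $J_0\setminus\supp_\sigma(w)$ has an $\mathrm{Ad}(w)\sigma$-saturation inside $K$ lying in $I(K,w,\sigma)$ is also vacuous: $J_0\setminus\supp_\sigma(w)$ is already a subset of $I(K,w,\sigma)$ by construction of $J$, so no new information is extracted. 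In short: the finite-type criterion and the contradiction framework are fine, but both the stability claim in step (2) and the relocation step (4) are unproved, and the plan to lean on the cited result for step (4) does not close the argument. You would need to engage more seriously with the interaction between the condition $w\in{}^{K}\tilde{W}$ (minimality in $W_K w$), the $\mathrm{Ad}(w)\sigma$-invariance of $I(K,w,\sigma)$, and the $\mathrm{Ad}(\tau)\sigma$-invariance of $\supp_\sigma(w)$ to produce a genuine contradiction.
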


Let $w\in  \Adm(\{\mu\})\cap {^{K}\tilde{W}}$ be an element such that $\supp_{\sigma}(w)$ is finite. Then $EKOR_{K, w}$ and the basic Newton stratum $S_{K, [\tau]}$ fits in the following diagram under the uniformizatiom map introduced in \eqref{uniform}
\begin{equation*}
\begin{tikzcd}
X_{K,w}(\tau)   \arrow[r]\arrow[d] &EKOR_{K, w} \arrow[r] \arrow[d] & J_{\tau}\backslash X_{K,w}(\tau)   \arrow[d]\\
 X(\mu, \tau)_{K} \arrow[r, "\iota_{K}"]  &S_{K, [\tau]} \arrow[r,"\gamma_{K}"] & J_{\tau}\backslash X(\mu, \tau)_{K}.
\end{tikzcd}
\end{equation*} 

\begin{corollary}\label{EKOR-DL}
Let $w\in  \Adm(\{\mu\})\cap {^{K}\tilde{W}}$. The basic EKOR stratum $EKOR_{K, w}$ can be uniformized by the fine Deligne-Lusztig variety $X_{K, w}(\tau)$ in the sense of Axiom \ref{uniformize} that is
\begin{equation*}
I_{\tau}(\QQ)\backslash X_{K, w}(\tau)\times X^{p}(\tau)/K^{p}\xrightarrow{\cong} EKOR_{K, w}.
\end{equation*}
\end{corollary}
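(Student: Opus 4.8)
The plan is to deduce the corollary directly from the isomorphism \eqref{uniform} of Axiom \ref{uniformize} by restricting it to the basic EKOR stratum. First I would recall that by Proposition \ref{basic-EKOR}, the hypothesis that $W_{\supp_\sigma(w)}$ is finite together with $w\in\Adm(\{\mu\})\cap{^K\tilde W}$ is exactly the condition $EKOR_{K,w}\subset S_{K,[\tau]}$, so that $EKOR_{K,w}=\nu_K^{-1}(\breve K_\sigma(\breve I w\breve I)/\breve K_\sigma)$ sits inside the basic Newton stratum. The key observation is then that the uniformization map $\iota_K\colon X(\mu,\tau)_K\to S_{K,[\tau]}$ is, by construction, compatible with the maps $\gamma_K$ (resp.\ $\nu_K$) landing in $J_\tau\backslash X(\mu,\tau)_K$ (resp.\ in $G(\breve\QQ_p)/\breve K_\sigma(\breve K_1\times\breve K_1)$), because the last clause of Axiom \ref{uniformize} forces $\gamma_K\circ\iota_K$ to be the natural projection. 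Hence $\iota_K^{-1}(EKOR_{K,w})$ is precisely the locus of $g\breve K$ with $g^{-1}\tau\sigma(g)\in\breve K_\sigma(\breve I w\breve I)$, which by definition is the fine affine Deligne-Lusztig variety $X_{K,w}(\tau)$; this is the left-hand square of the displayed diagram preceding the corollary.

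Next I would run this identification through the double-coset presentation. Since $X^p(\tau)$ is a $G(\AAA_f^p)$-torsor and $I_\tau$ acts on $X(\mu,\tau)_K$ only through its image in $J_\tau(\QQ_p)$, the subset $EKOR_{K,w}\subset S_{K,[\tau]}$ pulls back under \eqref{uniform} to the $I_\tau(\QQ)$-stable, $X^p(\tau)/K^p$-invariant subset $X_{K,w}(\tau)\times X^p(\tau)/K^p$; concretely, because the action of $I_\tau(\QQ)$ factors through $J_\tau(\QQ_p)$ and the decomposition $X(\mu,\tau)_K=\bigsqcup_{w}X_{K,w}(\tau)$ is $J_\tau(\QQ_p)$-stable (the $\breve K_\sigma(\breve I w\breve I)$ are double cosets, hence stable under left multiplication by $J_\tau\subset G(\breve\QQ_p)$), the restriction of the quotient map \eqref{uniform} to this sublocus is again an isomorphism onto its image, and that image is exactly $EKOR_{K,w}$. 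This gives the asserted isomorphism
\[
I_\tau(\QQ)\backslash X_{K,w}(\tau)\times X^p(\tau)/K^p\xrightarrow{\ \cong\ } EKOR_{K,w}.
\]

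The one point requiring a little care — and the place I expect to be the main obstacle — is checking that $X_{K,w}(\tau)$ is genuinely cut out inside $X(\mu,\tau)_K$ by the EKOR stratum, i.e.\ that $EKOR_{K,w}$ really is the preimage of the single piece $\breve K_\sigma(\breve I w\breve I)/\breve K_\sigma(\breve K_1\times\breve K_1)$ under $\nu_K$ and not of some larger union. For this I would invoke the finer decomposition $X(\mu,\tau)_K=\bigsqcup_{w\in{^K\tilde W}\cap\Adm^K(\{\mu\})}X_{K,w}(\tau)$ recalled in \S 3.2 (following \cite[3.4]{GH15}), which matches term-by-term the EKOR stratification of $Sh_K$ via the commutative diagram \eqref{Newton-ADLV}; combined with the compatibility of $\iota_K$ with $\nu_K$ implicit in Axiom \ref{uniformize}, this identifies the strata on the nose. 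Everything else is formal manipulation of the two cartesian-type squares in the diagram preceding the statement: the left square defines $X_{K,w}(\tau)$, the outer rectangle is \eqref{uniform}, and passing to $I_\tau(\QQ)$-quotients on the ADLV side yields the claim.
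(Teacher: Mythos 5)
Your proposal is correct and follows exactly the paper's implicit argument: the paper presents the commutative diagram above the corollary (with $\iota_K$ and $\gamma_K$) together with the compatibility clause of Axiom~\ref{uniformize}, and the corollary is meant to be read off from this diagram plus the finer decomposition $X(\mu,\tau)_K=\bigsqcup_{w}X_{K,w}(\tau)$, which is precisely what you spell out. The additional details you supply — that $J_\tau$-stability of the $X_{K,w}(\tau)$ follows from $j^{-1}\tau\sigma(j)=\tau$, and that the restriction of the quotient in \eqref{uniform} to a $J_\tau(\QQ_p)$-stable sublocus remains an isomorphism — are the right ones and match what the paper leaves unstated.
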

Using the above corollary, we can restrict our attention to the fine affine Deligne-Lusztig variety $X_{K, w}(\tau)$ in order to study the basic EKOR stratum $EKOR_{K, w}$. We recall the following results  of G\"{o}rtz and He \cite[Proposition 4.1.1, Theorem 4.1.2]{GH15} concerning the decompositions of fine affine Deligne-Lusztig varieties into fine Deligne-Lusztig varieties.

\begin{theorem}[{\cite[Theorem 4.1.1]{GH15}}]\label{fine-coarse}
For $J\subset \tilde{\mathbb{S}}$ and $w\in {^{J}\tilde{W}}$, 
\begin{equation*}
X_{J, w}(b)=\{g\breve{K}_{I(J, w, \sigma)}: g^{-1}b\sigma(g)\in \breve{K}_{I(J, w, \sigma)}w\breve{K}_{\sigma(I(J, w, \sigma))}\}.
\end{equation*}
\end{theorem}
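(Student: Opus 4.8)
The plan is to exhibit the projection $\pi\colon G(\breve{\QQ}_{p})/\breve{K}_{I}\to G(\breve{\QQ}_{p})/\breve{K}_{J}$, with $I:=I(J,w,\sigma)$ and $\breve{K}_{I}\subset\breve{K}_{J}$ since $I\subset J$, and to show that $\pi$ restricts to a bijection from $Z:=\{g\breve{K}_{I}:g^{-1}b\sigma(g)\in\breve{K}_{I}w\breve{K}_{\sigma(I)}\}$ onto $X_{J,w}(b)$. Recall that by definition $X_{J,w}(b)=\{g\breve{K}_{J}:g^{-1}b\sigma(g)\in\breve{K}_{J,\sigma}(\breve{I}w\breve{I})\}$, where $\breve{K}_{J,\sigma}(\breve{I}w\breve{I})$ is the orbit of $\breve{I}w\breve{I}$ under $\sigma$-conjugation by $\breve{K}_{J}$, so that $X_{J,w}(b)$ is just the image in $G(\breve{\QQ}_{p})/\breve{K}_{J}$ of the Iwahori-level affine Deligne--Lusztig variety $\{h\breve{I}:h^{-1}b\sigma(h)\in\breve{I}w\breve{I}\}$. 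There are three things to check: that the defining condition of $Z$ depends only on $g\breve{K}_{I}$; that $\pi(Z)=X_{J,w}(b)$; and that $\pi|_{Z}$ is injective.

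The first is formal: $\sigma(\breve{K}_{I})=\breve{K}_{\sigma(I)}$, so for $k\in\breve{K}_{I}$ one has $(gk)^{-1}b\sigma(gk)=k^{-1}\bigl(g^{-1}b\sigma(g)\bigr)\sigma(k)\in\breve{K}_{I}w\breve{K}_{\sigma(I)}\sigma(k)=\breve{K}_{I}w\breve{K}_{\sigma(I)}$, which is also the reason the natural right-hand factor is $\breve{K}_{\sigma(I)}$ rather than $\breve{K}_{I}$. Surjectivity of $\pi(Z)\to X_{J,w}(b)$ is likewise easy: given $g\breve{K}_{J}\in X_{J,w}(b)$, some $\breve{K}_{J}$-$\sigma$-conjugate $gk$ satisfies $(gk)^{-1}b\sigma(gk)\in\breve{I}w\breve{I}\subset\breve{K}_{I}w\breve{K}_{\sigma(I)}$, so $gk\breve{K}_{I}\in Z$ lies over $g\breve{K}_{J}$. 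The real content is therefore the inclusion $\breve{K}_{I}w\breve{K}_{\sigma(I)}\subset\breve{K}_{J,\sigma}(\breve{I}w\breve{I})$ (giving $\pi(Z)\subset X_{J,w}(b)$) together with the injectivity of $\pi|_{Z}$.

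For both I would analyze how $\breve{K}_{J}$-$\sigma$-conjugation moves Iwahori double cosets inside $\breve{K}_{J}w\breve{K}_{J}$, using the He--Rapoport decomposition $G(\breve{\QQ}_{p})=\bigsqcup_{x\in{}^{J}\tilde{W}}\breve{K}_{J,\sigma}(\breve{I}x\breve{I})$ of Theorem~\ref{decomp}. Since $w\in{}^{J}\tilde{W}$, one has $\breve{I}v\breve{I}\cdot\breve{I}w\breve{I}=\breve{I}vw\breve{I}$ with $l(vw)=l(v)+l(w)$ for every $v\in W_{J}$, and $\mathrm{Ad}(w)\sigma(I)\subset I$ gives $wW_{\sigma(I)}w^{-1}\subset W_{I}$; combining these yields the clean identity $\breve{K}_{I}w\breve{K}_{\sigma(I)}=\bigsqcup_{v\in W_{I}}\breve{I}vw\breve{I}$. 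It then remains to determine, for $v\in W_{J}$, exactly when $\breve{I}vw\breve{I}$ lies in the $x=w$ piece of the decomposition, and to show that this happens precisely for $v\in W_{I}$. This is where the maximality characterization of $I=I(J,w,\sigma)$ in \eqref{setI} is essential: via the standard Deligne--Lusztig reduction, passing to the reductive quotient $\breve{K}_{J}/\breve{K}_{J,1}$ (a connected reductive group over $\FF$ with Weyl group $W_{J}$, where $\breve{K}_{J,1}$ is the pro-unipotent radical), the $\breve{K}_{J}$-$\sigma$-orbit of $\breve{I}w\breve{I}$ is controlled by the $\mathrm{Ad}(w)\sigma$-stable subsets of $J$, and $I$ is by definition the largest such. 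Granting this, the ``$v\in W_{I}$'' half gives $\breve{K}_{I}w\breve{K}_{\sigma(I)}\subset\breve{K}_{J,\sigma}(\breve{I}w\breve{I})$, while the ``only $v\in W_{I}$'' half shows that the fibre of $\pi$ over a point of $X_{J,w}(b)$ meets $Z$ in exactly one point, i.e. that $\pi|_{Z}$ is injective; with the two easy points above this proves the theorem.

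The main obstacle is precisely this last analysis: translating the $\breve{K}_{J}$-$\sigma$-orbit problem for $\breve{I}w\breve{I}$ inside the infinite group $G(\breve{\QQ}_{p})$ into a Deligne--Lusztig computation in the finite reductive quotient, and then using the maximality of $I(J,w,\sigma)$ among $\mathrm{Ad}(w)\sigma$-stable subsets of $J$ to identify the outcome with $W_{I}$ exactly. The hypothesis $w\in{}^{J}\tilde{W}$ enters here in an essential but routine way, making the Iwahori double cosets multiply without length defects and so allowing the reduction to the reductive quotient to go through; everything else is bookkeeping with the He--Rapoport decomposition and Iwahori--Bruhat double cosets.
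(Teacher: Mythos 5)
Your overall strategy — exhibiting the projection $\pi\colon G(\breve{\QQ}_p)/\breve{K}_I\to G(\breve{\QQ}_p)/\breve{K}_J$ with $I=I(J,w,\sigma)$ and showing it restricts to a bijection $Z\to X_{J,w}(b)$ — is the right one, and the two easy checks (that the condition defining $Z$ depends only on $g\breve{K}_I$, using $\sigma(\breve{K}_I)=\breve{K}_{\sigma(I)}$, and surjectivity of $\pi|_Z$) are correct. The reduction of the remaining content to the decomposition $\breve{K}_I w\breve{K}_{\sigma(I)}=\bigsqcup_{v\in W_I}\breve{I}vw\breve{I}$, to the inclusion $\breve{K}_I w\breve{K}_{\sigma(I)}\subset\breve{K}_{J,\sigma}(\breve{I}w\breve{I})$, and to maximality of $I(J,w,\sigma)$ also identifies the right ingredients. (The paper itself does not prove this statement; it is simply cited from GH15, so there is no internal proof to compare against.)

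However, there is a genuine gap in the injectivity step. You claim that the ``only $v\in W_I$'' half of your dichotomy --- for $v\in W_J$, the double coset $\breve{I}vw\breve{I}$ lies in $\breve{K}_{J,\sigma}(\breve{I}w\breve{I})$ precisely when $v\in W_I$ --- implies that the fibre of $\pi$ over any point of $X_{J,w}(b)$ meets $Z$ in exactly one point. This does not follow. Injectivity of $\pi|_Z$ amounts to the assertion that, for $y\in\breve{K}_I w\breve{K}_{\sigma(I)}$, the set $\{h\in\breve{K}_J : h^{-1}y\sigma(h)\in\breve{K}_I w\breve{K}_{\sigma(I)}\}$ is exactly $\breve{K}_I$; that is a statement about the $\sigma$-twisted stabilizer of the cell $\breve{K}_I w\breve{K}_{\sigma(I)}$ inside $\breve{K}_J$. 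Your dichotomy is instead a statement about which Iwahori double cosets appear in the $\breve{K}_J$-orbit, and it leaves open the possibility that some $h\in\breve{K}_J\setminus\breve{K}_I$ sends an element of $\breve{I}v_1 w\breve{I}$ (with $v_1\in W_I$) to an element of $\breve{I}v_2 w\breve{I}$ (with $v_2\in W_I$): both cosets lie in the orbit, so nothing in your dichotomy rules this out, yet it would violate injectivity. What is actually needed (and what GH15 provides, by an inductive ``Deligne--Lusztig reduction'' tracking lengths under cyclic shifts by simple reflections, rather than a single passage to the reductive quotient) is the finer assertion that any $h\in\breve{K}_J$ carrying $\breve{K}_I w\breve{K}_{\sigma(I)}$ back into itself under $\sigma$-conjugation must already lie in $\breve{K}_I$. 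Without that, the inclusion $\pi(Z)\subset X_{J,w}(b)$ and the bijectivity are not established, so as written the proof is incomplete at its central point.
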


\begin{proposition}[{\cite[Proposition 4.1.2]{GH15}}]\label{affine-classical}
Let $J\subset \tilde{\mathbb{S}}$ and $w\in {^{J}\tilde{W}^{\sigma(J)}}\cap W_{a}\tau$ such that  $Ad(w)\sigma(J)\subset J$. If $W_{\supp_{\sigma}(w)\cup J}$ is finite, then 
\begin{equation*}
\{g \breve{K}_{J}: g^{-1}\tau\sigma(g)\in \breve{K}_{J}w\breve{K}_{\sigma(J)}\}=\bigsqcup_{i\in J_{\tau}/J_{\tau}\cap \breve{K}_{\supp_{\sigma}(w)\cup J}}iY_{J}(w)
\end{equation*}
where $Y_{J}(w)=\{g\breve{K}_{J}\in \breve{K}_{\supp_{\sigma}(w)\cup J}/\breve{K}_{J}: g^{-1}\tau\sigma(g)\in \breve{K}_{J}w\breve{K}_{\sigma(J)}\}$ is a classical Deligne-Lusztig variety.
\end{proposition}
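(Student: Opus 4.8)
The plan is to analyze the set $Z := \{g\breve{K}_{J} : g^{-1}\tau\sigma(g) \in \breve{K}_{J}w\breve{K}_{\sigma(J)}\}$ by first pinning down which $g\breve{K}_J$ can possibly occur, and then identifying the resulting locus with a union of translates of a classical Deligne-Lusztig variety. First I would observe that the hypothesis $W_{\supp_{\sigma}(w)\cup J}$ finite means that $\breve{K}_{\supp_{\sigma}(w)\cup J}$ is a genuine (finite-type) parahoric, i.e. the associated parabolic subgroup of $G(\breve{\QQ}_p)$ is actually a parahoric with reductive quotient a connected reductive group over $\FF_q$; write $M$ for the Levi quotient $\breve{K}_{\supp_{\sigma}(w)\cup J}/\breve{K}_{\supp_{\sigma}(w)\cup J, 1}$. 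The key reduction step is to show that every $g$ with $g^{-1}\tau\sigma(g)\in \breve{K}_{J}w\breve{K}_{\sigma(J)}$ lies, up to $J_\tau$-translation, in $\breve{K}_{\supp_{\sigma}(w)\cup J}$. This is where the $\sigma$-straightness of the length-zero element $\tau$ and the admissibility $w \in W_a\tau$ enter: since $\tau$ is basic and $\supp_\sigma(w)$ together with the $\tau\sigma$-action generates only a finite Weyl group, one can run the standard "reduction to the superbasic/straight case" argument (as in He's work on affine Deligne-Lusztig varieties, and as used in \cite[\S4]{GH15}) to conclude that the nonemptiness pattern of $X_{w}(\tau)$ inside the affine flag variety is controlled entirely by the finite subbuilding corresponding to $\breve{K}_{\supp_{\sigma}(w)\cup J}$, and that $J_\tau$ acts transitively on the set of such subbuildings. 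Concretely, one shows $Z = \bigsqcup_{i \in J_\tau/(J_\tau \cap \breve{K}_{\supp_{\sigma}(w)\cup J})} i\cdot Z_0$ where $Z_0 = Z \cap (\breve{K}_{\supp_{\sigma}(w)\cup J}/\breve{K}_J)$.

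Next I would identify $Z_0$ with a classical Deligne-Lusztig variety in the finite group $M$. Inside $\breve{K}_{\supp_{\sigma}(w)\cup J}$, reducing mod the pro-unipotent radical $\breve{K}_{\supp_{\sigma}(w)\cup J,1}$ sends $\breve{K}_J$ to a parabolic $P_{\bar J} \subset M$ and $w$ to a well-defined Weyl group element $\bar w$ of $M$; the condition $\mathrm{Ad}(w)\sigma(J) \subset J$ (equivalently $w \in {}^{J}\tilde{W}^{\sigma(J)}$ is compatible with the Frobenius on $M$, which is $\mathrm{Ad}(\tau)\circ\sigma$) guarantees that $\bar w \in {}^{\bar J}W_M$ is a minimal-length representative, so the twisted condition $g^{-1}\tau\sigma(g) \in \breve{K}_J w \breve{K}_{\sigma(J)}$ descends to $\bar g^{-1} \cdot {}^{F}\bar g \in P_{\bar J}\cdot (B\bar w B)$, $F = \mathrm{Ad}(\tau)\sigma$. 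This is exactly the defining condition of the fine Deligne-Lusztig variety $Y_J(w)$ from \S3.1, and when $\bar J$ is chosen compatibly (using $I(J,w,\sigma)$ as in Theorem \ref{fine-coarse}) it is a classical Deligne-Lusztig variety in the sense defined earlier. So $Z_0 = Y_J(w)$, and combined with the orbit decomposition above we get the claimed formula.

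The main obstacle I anticipate is the reduction step: proving rigorously that only the points in the $J_\tau$-orbit of the finite parahoric $\breve{K}_{\supp_{\sigma}(w)\cup J}$ contribute, and that $J_\tau$ permutes these transitively with stabilizer $J_\tau \cap \breve{K}_{\supp_{\sigma}(w)\cup J}$. This requires the structure theory of $\sigma$-conjugacy classes in $\tilde{W}$ (the fact from Theorem \ref{W-B}/its refinement that straight classes parametrize $B(G)$), the minimal-length results of He, and careful bookkeeping of which $\breve{I}$-double cosets inside $\breve{K}_J w \breve{K}_{\sigma(J)}$ lie in the $\tau\sigma$-conjugacy class of $\tau$ — i.e. the analogue, at this finer level, of Proposition \ref{basic-EKOR}. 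The remaining pieces (descent to $M$, recognizing $Y_J(w)$, compatibility with $I(J,w,\sigma)$ via Theorem \ref{fine-coarse}) are essentially formal once the reduction is in place. I would cite \cite[\S4]{GH15} for the technical heart and present the argument as an exposition of their proof adapted to the current notation.
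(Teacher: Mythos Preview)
The paper does not give its own proof of this proposition: it is stated as a direct citation of \cite[Proposition 4.1.2]{GH15} and used as a black box in the proof of Theorem \ref{Main-1}. There is therefore nothing in the paper to compare your attempt against. Your sketch is a faithful outline of the argument in \cite[\S4]{GH15}---the two-step strategy of (i) reducing to the finite parahoric $\breve{K}_{\supp_{\sigma}(w)\cup J}$ via the $J_{\tau}$-action and (ii) identifying the resulting piece with a classical Deligne--Lusztig variety in the reductive quotient---and you correctly flag the reduction step as the substantive part. If you intend to include a proof, citing \cite[\S4]{GH15} for the technical core (as you propose) is exactly what the paper does implicitly.
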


\begin{theorem}[G\"{o}rtz-He-Nie]\label{Main-1}
Every basic EKOR stratum on  $Sh_{K}$ can be written as a disjoint union of classical Deligne-Lusztig varieties.
\end{theorem}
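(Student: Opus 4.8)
The plan is to assemble the statement from the results already laid out in the excerpt, so the proof is essentially a matter of chaining together Corollary \ref{EKOR-DL}, Proposition \ref{basic-EKOR}, the structural results of G\"{o}rtz and He, and Axiom \ref{uniformize}. First I would fix $w\in\Adm(\{\mu\})\cap{}^{K}\tilde{W}$ with $EKOR_{K,w}\subset S_{K,[\tau]}$, so that by Proposition \ref{basic-EKOR} the group $W_{\supp_{\sigma}(w)}$ is finite. By Corollary \ref{EKOR-DL} the stratum $EKOR_{K,w}$ is uniformized, in the sense of Axiom \ref{uniformize}, by the fine Deligne-Lusztig variety $X_{K,w}(\tau)$: one has
\begin{equation*}
I_{\tau}(\QQ)\backslash X_{K,w}(\tau)\times X^{p}(\tau)/K^{p}\xrightarrow{\cong} EKOR_{K,w}.
\end{equation*}
Hence it suffices to show that $X_{K,w}(\tau)$ is a disjoint union of classical Deligne-Lusztig varieties, since the uniformization then carries such a decomposition (after taking the quotient by the discrete group $I_{\tau}(\QQ)$ and the prime-to-$p$ torsor, which only reindexes the pieces) down to $EKOR_{K,w}$.

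Next I would analyze $X_{K,w}(\tau)$ using Theorem \ref{fine-coarse} and Proposition \ref{affine-classical}. Set $J=I(K,w,\sigma)$, the maximal subset of $K$ with $\mathrm{Ad}(w)\sigma(J)\subset J$ from \eqref{setI}. By Theorem \ref{fine-coarse}, the fine affine Deligne-Lusztig variety $X_{K,w}(\tau)$ coincides with the coarser-looking set
\begin{equation*}
\{g\breve{K}_{J}: g^{-1}\tau\sigma(g)\in \breve{K}_{J}w\breve{K}_{\sigma(J)}\},
\end{equation*}
where one should note $w\in{}^{K}\tilde{W}\subset{}^{J}\tilde{W}^{\sigma(J)}$ and $w\in W_{a}\tau$ because $\Adm(\{\mu\})\subset W_{a}\tau$. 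By the Proposition following Proposition \ref{basic-EKOR}, finiteness of $W_{\supp_{\sigma}(w)}$ implies finiteness of $W_{\supp_{\sigma}(w)\cup J}$; this is precisely the hypothesis needed to invoke Proposition \ref{affine-classical}. Applying it gives
\begin{equation*}
X_{K,w}(\tau)=\bigsqcup_{i\in J_{\tau}/(J_{\tau}\cap \breve{K}_{\supp_{\sigma}(w)\cup J})}iY_{J}(w),
\end{equation*}
with each $Y_{J}(w)$ a classical (fine) Deligne-Lusztig variety inside the finite-dimensional partial flag variety $\breve{K}_{\supp_{\sigma}(w)\cup J}/\breve{K}_{J}$; the finiteness of $W_{\supp_{\sigma}(w)\cup J}$ is exactly what makes this a genuine flag variety of a finite reductive group and $Y_{J}(w)$ a classical object.

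Finally I would descend the decomposition through the uniformization. The group $I_{\tau}(\QQ)$ acts on $X_{K,w}(\tau)$ via $I_{\tau}(\QQ)\to J_{\tau}(\QQ_p)$ and permutes the pieces $iY_{J}(w)$; passing to the quotient by $I_{\tau}(\QQ)$ and multiplying by the $G(\AAA_f^p)$-torsor $X^{p}(\tau)/K^{p}$ (which contributes only finitely many copies, as $K^p$ is open) produces $EKOR_{K,w}$ as a finite disjoint union of copies of the $Y_{J}(w)$, hence of classical Deligne-Lusztig varieties. Since $Sh_K$ is the disjoint union over all relevant $w$ of its basic EKOR strata, this proves the theorem. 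The main obstacle, conceptually, is checking that the hypotheses of Proposition \ref{affine-classical} are literally met for $J=I(K,w,\sigma)$ — in particular that $w\in{}^{J}\tilde{W}^{\sigma(J)}$ and $\mathrm{Ad}(w)\sigma(J)\subset J$, which follow from $w\in{}^{K}\tilde{W}$ and the defining maximality of $I(K,w,\sigma)$ — together with verifying that the uniformization in Axiom \ref{uniformize} is compatible with disjoint decompositions on the source, which is formal once one notes that $I_\tau(\QQ)$ acts through $J_\tau(\QQ_p)$ preserving the stratification.
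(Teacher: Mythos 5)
Your proposal is correct and follows essentially the same route as the paper's proof: apply Theorem \ref{fine-coarse} with $J=I(K,w,\sigma)$ to rewrite $X_{K,w}(\tau)$, then invoke Proposition \ref{affine-classical} to decompose it into translates of a classical Deligne-Lusztig variety $Y_J(w)$, and carry this through the uniformization of Corollary \ref{EKOR-DL}. The paper's proof is terser — it does not explicitly verify $w\in{}^{J}\tilde{W}^{\sigma(J)}$, $w\in W_a\tau$, or the finiteness of $W_{\supp_\sigma(w)\cup J}$ (which is the content of the cited Proposition 5.7 of G\"{o}rtz--He--Nie) — whereas you spell these hypothesis checks out, which is a modest but genuine improvement in completeness.
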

\begin{proof}
By Theorem \ref{fine-coarse}, 
\begin{equation*}
X_{K, w}(\tau)=\{g\breve{K}_{I(K, w, \sigma)}: g^{-1}\tau\sigma(g)\in \breve{K}_{I(K, w, \sigma)}w\breve{K}_{\sigma(I(K, w, \sigma))}\}
\end{equation*}
where $I(K,w, \sigma)$ is defined as in \eqref{setI} and in particular $Ad(w)\sigma(I(K,w, \sigma))\subset I(K,w, \sigma)$. It follows from Proposition \ref{affine-classical} that we have 
\begin{equation*}
\begin{split}
X_{K, w}(\tau)= & \{g\breve{K}_{I(K, w, \sigma)}: g^{-1}\tau\sigma(g)\in \breve{K}_{I(K, w, \sigma)}w\breve{K}_{\sigma(I(K, w, \sigma))}\} \\
 & = \bigsqcup_{i\in J_{\tau}/J_{\tau}\cap \breve{K}_{\supp_{\sigma}(w)\cup I(K, w, \sigma)}}iY_{I(K, w, \sigma)}(w). \\
\end{split}
\end{equation*}
\end{proof}

\begin{remark}
In the case $w$ is a $\sigma$-Coxeter element in the finite group $W_{\supp_{\sigma}(w)}$, the Deligne-Lusztig variety $Y_{I(K, w, \sigma)}(w)$ is isomorphic to the following fine Deligne-Lusztig variety by \cite[Corollary 4.6.2]{GH15}
\begin{equation*}
Y_{I(K, w, \sigma)}(w)=\{g \breve{K}_{\supp_{\sigma}(w)\cap K} \in \breve{K}_{\supp_{\sigma}(w)}/\breve{K}_{\supp_{\sigma}(w)\cap K}: g^{-1}\tau\sigma(g)\in {\breve{K}_{\supp_{\sigma}(w)\cap K \sigma}}\breve{I}w\breve{I} \}
\end{equation*}
and $Y_{I(K, w, \sigma)}(w)$ is in turn isomorphic to the classical Deligne-Lusztig vareity
\begin{equation*}
\{g\breve{I}\in \breve{K}_{\supp_{\sigma}(w)}/\breve{I}: g^{-1}\tau\sigma(g)\in \breve{I}w\breve{I}\}.
\end{equation*}
\end{remark}

\section{Basic EKOR strata on Siegel modular varieties}
\subsection{Siegel modular varieties} We first review the Siegel moduli spaces of abelian varieties with parahoric level structures. The algebraic group associated to this Shimura variety is $G=\GSp_{2g}$ the group of symplectic similitudes. We consider the dominant miniscule coweight $\mu=(1^{(g)}, 0^{(g)})$ as in Example \ref{Siegel}. The finite Weyl group $W_{g}$ is generated by the simple reflections given by
\begin{equation*}
s_{g}=(g, g+1)\text{ and } s_{i}=(i,i+1)(2g-i, 2g+1-i)\text{ for }i=1, \cdots, g-1. 
\end{equation*}
The cocharacter group of $G$ with respect to the diagonal torus can be identified with 
\begin{equation*}
X_{*}(T)=\{(x_{1}, \cdots, x_{2g})\in \ZZ^{2g}:x_{1}+x_{2g}=x_{2}+x_{2g-1}=\cdots=x_{g}+x_{g+1}\}.
\end{equation*}
The set of simple reflections $\tilde{\mathbb{S}}$ in the affine Weyl group is $\{s_{0}, s_{1}, \cdots, s_{g}\}$ with
\begin{equation*}
s_{0}=((-1, 0, \cdots, 0, 1), (1, 2g))\in X_{*}(T)\rtimes W.
\end{equation*}
We will use the notation $I:=\{0, 1, 2, \cdots, g\}$ as the index set as well as the notation for the Iwahori subgroup.  Consider 
\begin{equation*}
\tau=((0^{(g)}, 1^{(g)}), (1,g+1)(2, g+2)\cdots(g, 2g))
\end{equation*}
the unique element of length $0$ in $\Adm({\{\mu\}})$. Note the class $[\tau]$ gives the unique basic class in $B(G, \mu)$.

Let $J\subset I$ given by $\{j_{0}<j_{1}<\cdots< j_{r}\}$. The moduli space $\mathcal{A}_{J}$ over $\FF_{p}$ with parahoric level structure of type $J$ parametrizes chains of $g$-dimensional polarized abelian varieties
\begin{equation*}
(A_{j_{0}}\xrightarrow{\alpha} A_{j_{1}}\xrightarrow{\alpha}\cdots \xrightarrow{\alpha} A_{j_{r}}, \eta)
\end{equation*}
where 
\begin{enumerate}
\item $A_{j_{i}}$ is a $g$-dimensional abelian variety and $\lambda_{j_{i}}$ a polarization of degree $p^{2(g-j_{i})}$ whose kernel is contained in $A_{j_{i}}[p]$.
\item $\alpha$ is an isogeny that pulls back $\lambda_{j_{i+1}}$ to $\lambda_{j_{i}}$.
\item $\eta$ is a symplectic level $N$ structure with $N\geq 3$ coprime to $p$ with a fixed primitive $N$-th root of unity.
\end{enumerate}
Given a chain $(A_{j_{i}}, \eta)_{j_{i}\in J}$, we can extend the index set $J$ to $J\cup\{2g-j_{i}: j_{i}\in J\}$ by duality:
\begin{equation*}
A_{j_{0}}\rightarrow A_{j_{1}}\rightarrow\cdots \rightarrow A_{j_{r}} \rightarrow A_{2g-j_{r}}:=A^{\vee}_{j_{r}}\rightarrow \cdots \rightarrow A_{2g-j_{0}}:=A^{\vee}_{j_{0}}.
\end{equation*}

When $J=I$, this recovers the Siegel moduli space with Iwahori level structure $\mathcal{A}_{I}$. When $J=\{0\}$, then $\mathcal{A}_{J}$ is the classical moduli space $\mathcal{A}_{g}$ of principally polarized $g$-dimensional abelian varieties. For simplicity, we will concentrate on these two cases in the following.

\subsection{Basic KR stratum} On $\mathcal{A}_{I}$, the relative position of $\mathrm{H}_{1}^{\mathrm{dR}}(A_{i})$ and $\omega_{A^{\vee}_{i}}$ gives a well-defined element $w\in\Adm(\{\mu\})$, see \cite[3.1]{GY12}. The resulting map
\begin{equation*}
\lambda_{I}: \mathcal{A}_{I}\rightarrow \Adm(\{\mu\})
\end{equation*}
gives the KR stratification for $\mathcal{A}_{I}$. For $w\in \Adm(\{\mu\})$, we will denote the corresponding KR stratum by $\mathcal{A}_{I, w}$. The local Dynkin diagram of type $\tilde{C}_{g}$ is given by 

\begin{displaymath}
 \xymatrix{\underset{0}\bullet \ar@{=>}[r]& \underset{1}\bullet \ar@{-}[r] &\underset{2}\bullet &\underset{3}\bullet\ar@{-}[l]\ar@{-}[r] &\cdots\ar@{-}[r]&\underset{g-1}\bullet\ar@{<=}[r]&\underset{g}\bullet}.
\end{displaymath}
The operator $\tau\sigma$ acts on the diagram and exchanges the nodes $i$ and $g-i$ for $i=0, \cdots, g$. Recall that the KR stratum $\mathcal{A}_{I, w}$ is contained in the basic Newton stratum if and only if the Weyl group $W_{\supp_{\sigma}(w)}$ is finite by Proposition \ref{basic-EKOR}. To simplify the notation, in this section we denote by $W_{\{c, g-c\}}$ the Weyl group generated by the simple reflections in $\tilde{\mathbb{S}}-\{c, g-c\}$.

\begin{proposition}\label{basic-fin}
Let $w\in \Adm(\{\mu\})$. The Weyl group $W_{\supp_{\sigma}(w)}$ is finite if and only if $w\in W_{\{c, g-c\}}\tau$ for some $c\leq g/2$.
\end{proposition}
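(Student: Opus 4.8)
The plan is to replace the finiteness of $W_{\supp_\sigma(w)}$ by a purely combinatorial condition on the subdiagram $\supp_\sigma(w)$ of the local Dynkin diagram $\tilde{C}_{g}$, and then read off the asserted parametrization. First I would record the shape of $\supp_\sigma(w)$: since $\tau\sigma$ acts on the node set $\{0,1,\dots,g\}$ by the involution $i\mapsto g-i$, the $\langle\tau\sigma\rangle$-orbit of $\supp(w)$ inside $\tilde{\mathbb{S}}$ is simply
\[
\supp_\sigma(w)\;=\;\bigcup_{n\in\ZZ}(\tau\sigma)^{n}\bigl(\supp(w)\bigr)\;=\;\supp(w)\cup\{\,g-i:\ i\in\supp(w)\,\},
\]
that is, $\supp_\sigma(w)$ is the smallest subset of $\tilde{\mathbb{S}}$ containing $\supp(w)$ that is stable under $i\mapsto g-i$. (Throughout I use, as in the definition, $\supp(w):=\supp(w\tau^{-1})$, which makes sense since $w\in\Adm(\{\mu\})\subset W_{a}\tau$.)

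Next I would invoke the standard structural fact about the irreducible affine Coxeter group $W_{a}$ of type $\tilde{C}_{g}$: for every proper subset $J\subsetneq\tilde{\mathbb{S}}$ the parabolic subgroup $W_{J}$ is finite, because deleting any one node of the diagram $\tilde{C}_{g}$ leaves a diagram all of whose connected components are of finite type, whereas $W_{\tilde{\mathbb{S}}}=W_{a}$ itself is infinite. Consequently $W_{\supp_\sigma(w)}$ is finite if and only if $\supp_\sigma(w)\neq\tilde{\mathbb{S}}$.

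It then remains to translate $\supp_\sigma(w)\neq\tilde{\mathbb{S}}$ into the description in the statement. By the first step, $\supp_\sigma(w)=\tilde{\mathbb{S}}$ precisely when for every $j\in\{0,\dots,g\}$ at least one of $j$, $g-j$ lies in $\supp(w)$; negating this and setting $c:=\min(j,g-j)\le g/2$, one gets that $W_{\supp_\sigma(w)}$ is finite if and only if there exists $c\le g/2$ with $\{c,g-c\}\cap\supp(w)=\emptyset$. Finally, by the characterization of elements of a Coxeter group that lie in a standard parabolic subgroup, $\{c,g-c\}\cap\supp(w)=\emptyset$ is equivalent to $w\tau^{-1}\in W_{\tilde{\mathbb{S}}-\{c,g-c\}}=W_{\{c,g-c\}}$, i.e.\ to $w\in W_{\{c,g-c\}}\tau$, which gives the proposition. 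I do not expect a serious obstacle here; the main points needing care are the bookkeeping forced by the convention $\supp(w)=\supp(w\tau^{-1})$ on $\Adm(\{\mu\})\subset W_{a}\tau$, and the appeal to the fact that every proper sub-diagram of $\tilde{C}_{g}$ is of finite type, which is where whatever content the statement has is concentrated.
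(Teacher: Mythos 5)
Your proof is correct and follows essentially the same route as the paper: both reduce the finiteness of $W_{\supp_\sigma(w)}$ to the condition $\supp_\sigma(w)\neq\tilde{\mathbb{S}}$ (using that proper subdiagrams of $\tilde{C}_g$ are of finite type while $W_a$ itself is infinite), and both use the $\tau\sigma$-stability of $\supp_\sigma(w)$ under $i\mapsto g-i$ to conclude that omitting some $s_c$ forces omitting $s_{g-c}$ as well. Your write-up is merely more explicit than the paper's terse two-sentence argument.
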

\begin{proof}
One direction is clear that is if $w\in W_{\{c, g-c\}}\tau$, then $W_{\supp_{\sigma}(w)}$ is obviously finite. On the other hand, if $W_{\supp_{\sigma}(w)}$ is finite, then there is some $s_{c}$ that is not in $\supp_{\sigma}(w)$. It follows then that $s_{g-c}\not\in \supp(w\tau^{-1})$. Therefore $w\in  W_{\{c, g-c\}}\tau$.
\end{proof}

In \cite{GY10}, the authors call a KR stratum $\mathcal{A}_{I, w}$ \emph{superspecial} if there is an $0\leq c\leq [g/2]$ such that for all $(A_{j}, \eta)_{j\in I}\in \mathcal{A}_{I,w}$, the abelian varieites $A_{c}$ and $A_{g-c}$ are  superspecial and that the isogeny $A_{c}\rightarrow A^{\vee}_{g-c}$ is the Frobenius map. By \cite[Proposition 4.4]{GY10}, the KR stratum $\mathcal{A}_{I,w}$ associated to $w$ is superspecial if and only if $w\in W_{\{c, g-c\}}\tau$ for some $0\leq c\leq [g/2]$. Therefore we have recovered the following theorem of G\"{o}rtz and Yu.
\begin{theorem}[{\cite[Theorem 1.4]{GY10}}]
All basic KR strata are superspecial.
\end{theorem}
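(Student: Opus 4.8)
The plan is to reduce the statement ``all basic KR strata are superspecial'' to the combinatorial characterization already assembled in the excerpt. First I would recall that by Proposition \ref{basic-fin} a KR stratum $\mathcal{A}_{I, w}$ is contained in the basic Newton stratum $S_{I, [\tau]}$ precisely when $w \in W_{\{c, g-c\}}\tau$ for some $c \le g/2$; by Proposition \ref{basic-EKOR} (with $K = I$, so that $\Adm(\{\mu\}) \cap {^{I}\tilde{W}} = \Adm(\{\mu\})$ and $EKOR_{I, w} = KR_{I, w}$) this is the same as saying $W_{\supp_{\sigma}(w)}$ is finite, which is the defining condition for being ``basic''. So the set of $w$ indexing basic KR strata is exactly $\bigsqcup_{0 \le c \le [g/2]} W_{\{c, g-c\}}\tau$ (intersected with $\Adm(\{\mu\})$).

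Next I would invoke the geometric result of G\"{o}rtz--Yu: by \cite[Proposition 4.4]{GY10} the KR stratum $\mathcal{A}_{I, w}$ is superspecial if and only if $w \in W_{\{c, g-c\}}\tau$ for some $0 \le c \le [g/2]$. Combining this with the previous paragraph, the index set for basic KR strata and the index set for superspecial KR strata coincide, which is precisely the assertion of the theorem. Thus the proof is essentially a matching of two combinatorial descriptions of the same subset of $\Adm(\{\mu\})$, one coming from the Newton-stratification side (via finiteness of $W_{\supp_{\sigma}(w)}$) and one coming from the explicit moduli-theoretic description of superspecial strata on $\mathcal{A}_I$.

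Concretely, the write-up would read: let $w \in \Adm(\{\mu\})$ index a basic KR stratum, i.e.\ $\mathcal{A}_{I, w} \subset S_{I, [\tau]}$. By Proposition \ref{basic-EKOR} this forces $W_{\supp_{\sigma}(w)}$ to be finite, hence by Proposition \ref{basic-fin} we have $w \in W_{\{c, g-c\}}\tau$ for some $c \le g/2$; replacing $c$ by $g-c$ if necessary we may assume $0 \le c \le [g/2]$. Then \cite[Proposition 4.4]{GY10} applies and shows $\mathcal{A}_{I, w}$ is superspecial. As $w$ was arbitrary among the basic KR indices, every basic KR stratum is superspecial.

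The main obstacle is essentially bookkeeping rather than mathematics: one must make sure the $\sigma$-twist is handled correctly, i.e.\ that the $\tau\sigma$-action on the type-$\tilde{C}_g$ local Dynkin diagram swapping node $i$ with node $g-i$ is what makes $\supp_{\sigma}(w)$ omit a \emph{symmetric} pair $\{c, g-c\}$, and that this symmetric pair is exactly the one entering the superspecial condition of \cite{GY10}. One also has to confirm that the ``if'' direction of Proposition \ref{basic-fin} together with membership in $\Adm(\{\mu\})$ does not accidentally enlarge the index set beyond what \cite[Proposition 4.4]{GY10} covers; since both statements quantify over the same ranges of $c$, this is immediate, but it is the point where a careless argument could go wrong. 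No further input is needed beyond Propositions \ref{basic-EKOR} and \ref{basic-fin} and the cited result of G\"{o}rtz--Yu.
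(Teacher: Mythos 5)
Your argument is correct and is essentially identical to the one the paper gives: combine Proposition \ref{basic-EKOR} (basic $\Leftrightarrow$ $W_{\supp_\sigma(w)}$ finite) with Proposition \ref{basic-fin} ($W_{\supp_\sigma(w)}$ finite $\Leftrightarrow$ $w \in W_{\{c,g-c\}}\tau$ for some $c \le g/2$) and then invoke \cite[Proposition 4.4]{GY10} to translate the combinatorial condition into superspeciality. The paper does exactly this matching of index sets, so there is nothing to add.
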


Next we explain how to describe the basic KR stratum $\mathcal{A}_{I, w}$ in terms of a disjoint union of Deligne-Lusztig varieties as in Theorem \ref{Main-1}.  Note that the axioms of He-Rapoport is verified in \cite[\S 7]{HR17} and the axiom \ref{uniform} is verified using Rapoport-Zink uniformization \cite[Theorem 6.1]{RZ96}. Therefore Corollary \ref{EKOR-DL} takes the following form 
\begin{equation*}
I_{\tau}(\QQ)\backslash X_{I, w}(\tau)\times G(\mathbb{A}^{p}_{f})/K^{p}\xrightarrow{\cong} \mathcal{A}_{I, w}
\end{equation*}
where $I_{\tau}$ is an inner form of $\GSp_{2g}$ and 
\begin{equation*}
X_{I, w}(\tau)=\bigsqcup_{i\in J_{\tau}/J_{\tau}\cap \breve{K}_{\supp_{\sigma}(w)}}iY_{\sigma}(w)
\end{equation*}
where $Y_{\sigma}(w)=\{g\breve{I}\in \breve{K}_{\supp_{\sigma}(w)}/\breve{I}: g^{-1}\tau\sigma(g)\in \breve{I}w\breve{I}\}.$

Let $J\subset I$ be a $\tau\sigma$-stable subset. We make the following manipulation, since $\supp_{\sigma}(w)\subset J$
\begin{equation*}
\begin{split}
X_{I, w}(\tau)&=\bigsqcup_{i\in J_{\tau}/J_{\tau}\cap \breve{K}_{\supp_{\sigma}(w)}}iY_{\sigma}(w)\\
&=\bigsqcup_{i\in J_{\tau}/J_{\tau}\cap \breve{K}_{J}}i \bigsqcup_{j\in J_{\tau}\cap \breve{K}_{J}/J_{\tau}\cap \breve{K}_{\supp_{\sigma}(w)}}jY_{\sigma}(w)\\
&=\bigsqcup_{i\in J_{\tau}/J_{\tau}\cap \breve{K}_{J}}i Y(J, w)\\
\end{split}
\end{equation*}
where $Y(J, w)=\{g\breve{I}\in \breve{K}_{J}/\breve{I}: g^{-1}\tau\sigma(g)\in \breve{I}w\breve{I}\}$. Consider the stratum $\mathcal{A}_{J, \tau}$ on $\mathcal{A}_{J}$. This is a finite set of points known as the \emph{minimal EKOR stratum}. In fact we have the following lemma.
\begin{lemma}\label{min}
$\mathcal{A}_{J, \tau}=I_{\tau}(\QQ)\backslash (J_{\tau}/J_{\tau}\cap \breve{K}_{J})\times G(\mathbb{A}^{p}_{f})/K^{p}.
$
\end{lemma}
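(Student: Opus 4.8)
\textbf{Proof proposal for Lemma \ref{min}.}

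The plan is to deduce this from the uniformization of the basic Newton stratum (Axiom \ref{uniformize}) together with the description of the minimal EKOR stratum $EKOR_{J,\tau}$ that comes out of Corollary \ref{EKOR-DL} and Proposition \ref{affine-classical}. First I would unwind what $EKOR_{J,\tau}$ is: by definition it is $EKOR_{J,\tau} = \nu^{-1}_{J}(\breve{K}_{\sigma}(\breve{I}\tau\breve{I})/\breve{K}_{\sigma})$, and since $\tau$ has length $0$ we have $\supp_{\sigma}(\tau) = \emptyset$, so $W_{\supp_{\sigma}(\tau)}$ is trivially finite and $\tau$ lies in $\Adm(\{\mu\})\cap {}^{J}\tilde{W}$ (as $\tau\in\Omega$ stabilizes the base alcove). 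Hence by Corollary \ref{EKOR-DL} the stratum $EKOR_{J,\tau} = \mathcal{A}_{J,\tau}$ is uniformized by the fine Deligne--Lusztig variety $X_{J,\tau}(\tau)$, i.e. $\mathcal{A}_{J,\tau} \cong I_{\tau}(\QQ)\backslash X_{J,\tau}(\tau)\times X^{p}(\tau)/K^{p}$, and for Siegel varieties $X^{p}(\tau) = G(\mathbb{A}^{p}_{f})$ by the Rapoport--Zink uniformization already invoked above.

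The second step is to identify $X_{J,\tau}(\tau)$ with the discrete set $J_{\tau}/(J_{\tau}\cap \breve{K}_{J})$. By Theorem \ref{fine-coarse}, $X_{J,\tau}(\tau) = \{g\breve{K}_{I(J,\tau,\sigma)} : g^{-1}\tau\sigma(g)\in \breve{K}_{I(J,\tau,\sigma)}\tau\breve{K}_{\sigma(I(J,\tau,\sigma))}\}$. Since $\supp_{\sigma}(\tau)=\emptyset$, Proposition \ref{affine-classical} (applied with $w=\tau$, noting $\Ad(\tau)\sigma(I(J,\tau,\sigma))\subset I(J,\tau,\sigma)$ and that $W_{\supp_{\sigma}(\tau)\cup I(J,\tau,\sigma)}=W_{I(J,\tau,\sigma)}$ is finite because $I(J,\tau,\sigma)\subset J\subsetneq \tilde{\mathbb{S}}$ omits a node) gives $X_{J,\tau}(\tau) = \bigsqcup_{i\in J_{\tau}/(J_{\tau}\cap \breve{K}_{I(J,\tau,\sigma)})} iY_{I(J,\tau,\sigma)}(\tau)$, where $Y_{I(J,\tau,\sigma)}(\tau) = \{g\breve{K}_{I(J,\tau,\sigma)}\in \breve{K}_{I(J,\tau,\sigma)}/\breve{K}_{I(J,\tau,\sigma)}\}$ is a single point. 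Thus $X_{J,\tau}(\tau)$ is the discrete $J_{\tau}$-set $J_{\tau}/(J_{\tau}\cap \breve{K}_{I(J,\tau,\sigma)})$; one then checks that the $J_{\tau}$-action here factors through, and agrees with, the $J_{\tau}$-action on $J_{\tau}/(J_{\tau}\cap \breve{K}_{J})$ since $\tau$ normalizes $\breve{K}_{J}$ (because $J$ is $\tau\sigma$-stable, hence $\Ad(\tau)$ permutes $J$ up to $\sigma$) — more simply, for the minimal stratum one can observe directly from the definition of $\nu_J$ and the decomposition in Theorem \ref{decomp} that the fiber over $\breve{K}_\sigma(\breve{I}\tau\breve{I})$ inside $G(\breve{\QQ}_p)/\breve{K}_\sigma$ is exactly the image of $\breve{K}_\sigma \tau \breve{K}$, which is a torsor under $J_\tau$ modulo $J_\tau\cap\breve{K}_J$ after passing to $\gamma_K$.

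The third step is purely formal: substitute $X_{J,\tau}(\tau) \cong J_{\tau}/(J_{\tau}\cap \breve{K}_{J})$ and $X^{p}(\tau) = G(\mathbb{A}^{p}_{f})$ into the isomorphism from Corollary \ref{EKOR-DL}, obtaining $\mathcal{A}_{J,\tau} = I_{\tau}(\QQ)\backslash (J_{\tau}/J_{\tau}\cap \breve{K}_{J})\times G(\mathbb{A}^{p}_{f})/K^{p}$, which is the assertion; one should note that the $I_{\tau}(\QQ)$-action is via $I_{\tau}(\QQ)\to J_{\tau}(\QQ_{p})$ as specified in Axiom \ref{uniformize}, and that the diagonal quotient is well-defined because $K^{p}$ is sufficiently small. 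The main obstacle I anticipate is the bookkeeping in the second step: matching the parahoric $\breve{K}_{I(J,\tau,\sigma)}$ appearing in Theorem \ref{fine-coarse} with the parahoric $\breve{K}_{J}$ of the stated level, i.e. verifying that collapsing the coset space $J_{\tau}/(J_{\tau}\cap\breve{K}_{I(J,\tau,\sigma)})$ down the tower to $J_{\tau}/(J_{\tau}\cap\breve{K}_{J})$ is exactly the map induced by $EKOR_{J,\tau}\hookrightarrow \mathcal{A}_J$, so that no points are lost or duplicated — this is where one genuinely uses that $\tau$ has length $0$ and that $J$ is $\tau\sigma$-stable, and it is cleanest to argue it directly on the level of the He--Rapoport maps $\gamma_J$, $\nu_J$ rather than through the general machinery of \cite{GH15}.
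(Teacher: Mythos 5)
Your proof is correct but takes a genuinely more self-contained route than the paper. The paper also begins by invoking Corollary \ref{EKOR-DL} to reduce the statement to the set-theoretic identification $X_{K_J,\tau}(\tau)\cong J_\tau/(J_\tau\cap\breve{K}_J)$, but it then establishes that identification by citing \cite[Theorem 2.1.1]{GH15} for the Iwahori level $\breve{K}_J=\breve{I}$ and passing to general $J$ via the finite \'etale covering $\mathcal{A}_{I,\tau}\to\mathcal{A}_{J,\tau}$ (citing \cite{SYZ19} and \cite[Proposition 6.1]{GY10}). You instead derive the identification directly from Theorem \ref{fine-coarse} and Proposition \ref{affine-classical}, which are already stated in the paper, so your argument needs no external black boxes and no descent step. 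One remark that would have let you discharge the "main obstacle" you flag at the end: since $J$ is assumed $\tau\sigma$-stable and $\tau$ has length $0$, one has $\mathrm{Ad}(\tau)\sigma(J)=J$ and hence $I(J,\tau,\sigma)=J$ on the nose, so $\breve{K}_{I(J,\tau,\sigma)}=\breve{K}_J$ and there is no parahoric mismatch to reconcile; with $\supp_\sigma(\tau)=\emptyset$ and $W_J$ finite, Proposition \ref{affine-classical} then gives $X_{J,\tau}(\tau)=\bigsqcup_{i\in J_\tau/J_\tau\cap\breve{K}_J} i\cdot\{\mathrm{pt}\}$ immediately, and the lemma follows by substitution into Corollary \ref{EKOR-DL}. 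What the paper's route buys, in exchange for the extra citations, is a statement that does not require unpacking the structure of $I(J,\tau,\sigma)$; what yours buys is a shorter, internally-contained derivation.
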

\begin{proof}
By Corollary \ref{EKOR-DL}, we have $$I_{\tau}(\QQ)\backslash X_{K_{J}, \tau}(\tau)\times G(\mathbb{A}^{p}_{f})/K^{p}\xrightarrow{\cong} A_{J, \tau}.$$ Then we need to show that $(J_{\tau}/J_{\tau}\cap \breve{K}_{J})\cong X_{K_{J}, \tau}(\tau)$. In the case $\breve{K}_{J}=\breve{I}$, this is shown in \cite[Theorem 2.1.1]{GH15}. The general case can be deduced from this by using the fact that $\mathcal{A}_{I, \tau}\rightarrow \mathcal{A}_{J, \tau}$ is finite \'{e}tale as shown in  \cite{SYZ19}. See also \cite[Proposition 6.1]{GY10}.
\end{proof}

\begin{theorem}\label{GY-comp}
The basic KR stratum $\mathcal{A}_{I, w}$ is isomorphic to $\mathcal{A}_{J, \tau}\times Y(J, w)$.
\end{theorem}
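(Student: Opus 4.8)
The plan is to feed the description of $X_{I,w}(\tau)$ obtained in the manipulation preceding the statement into the Iwahori-level case of Corollary~\ref{EKOR-DL}, descend it to a fibration over the minimal EKOR stratum $\mathcal{A}_{J,\tau}$, and then exploit that $\mathcal{A}_{J,\tau}$ is a finite set of points, so that any fibration over it is trivial.

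First I would record the explicit uniformization. Since the He--Rapoport axioms hold for $\mathcal{A}_{I}$ by \cite[\S7]{HR17} and Axiom~\ref{uniformize} follows from Rapoport--Zink uniformization \cite[Theorem 6.1]{RZ96}, Corollary~\ref{EKOR-DL} together with the identity $X_{I,w}(\tau)=\bigsqcup_{i\in J_{\tau}/J_{\tau}\cap\breve{K}_{J}}iY(J,w)$ yields
\begin{equation*}
\mathcal{A}_{I,w}\;\cong\; I_{\tau}(\QQ)\backslash\Bigl(\,\bigsqcup_{i\in J_{\tau}/J_{\tau}\cap\breve{K}_{J}}iY(J,w)\Bigr)\times G(\mathbb{A}^{p}_{f})/K^{p}.
\end{equation*}
Each piece $iY(J,w)$ lies in $i\breve{K}_{J}/\breve{I}\subset G(\breve{\QQ}_{p})/\breve{I}$, and for distinct classes $i$ these subsets are disjoint (if $i^{-1}i'\in J_{\tau}$ lay in $\breve{K}_{J}$ it would lie in $J_{\tau}\cap\breve{K}_{J}$). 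Hence $g\breve{I}\mapsto g\breve{K}_{J}$ is a well-defined $J_{\tau}$-equivariant morphism $\pi\colon X_{I,w}(\tau)\to J_{\tau}/J_{\tau}\cap\breve{K}_{J}$ whose fibre over the class of $i$ is $iY(J,w)$, and left translation by $i$ is an isomorphism of $\FF$-schemes $Y(J,w)\xrightarrow{\sim}iY(J,w)$.

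Next I would descend $\pi$. Taking the product with the prime-to-$p$ factor and passing to $I_{\tau}(\QQ)$-quotients, $\pi$ induces a morphism $p\colon \mathcal{A}_{I,w}\to I_{\tau}(\QQ)\backslash\bigl(J_{\tau}/J_{\tau}\cap\breve{K}_{J}\bigr)\times G(\mathbb{A}^{p}_{f})/K^{p}$, and by Lemma~\ref{min} the target is exactly $\mathcal{A}_{J,\tau}$. Because $\mathbf{K}^{p}$ is sufficiently small, $I_{\tau}(\QQ)$ acts freely on $G(\mathbb{A}^{p}_{f})/K^{p}$, hence on $J_{\tau}\times G(\mathbb{A}^{p}_{f})/K^{p}$; so $\mathcal{A}_{J,\tau}$ is a finite disjoint union of copies of $\mathrm{Spec}\,\FF$, and for a point $\bar a\in\mathcal{A}_{J,\tau}$ with representative $(i_{0},g^{p})$ no nontrivial element of $I_{\tau}(\QQ)$ fixes $(i_{0},g^{p}K^{p})$. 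Normalising the prime-to-$p$ coordinate, one then checks that $p^{-1}(\bar a)$ maps bijectively, as an $\FF$-scheme, onto the single fibre $\pi^{-1}(i_{0})=i_{0}Y(J,w)\cong Y(J,w)$. Since $\mathcal{A}_{J,\tau}$ is a finite disjoint union of points, $\mathcal{A}_{I,w}=\bigsqcup_{\bar a\in\mathcal{A}_{J,\tau}}p^{-1}(\bar a)\cong\mathcal{A}_{J,\tau}\times Y(J,w)$.

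The step I expect to need the most care is the last one. A priori $I_{\tau}(\QQ)$ could glue together several translates $iY(J,w)$ lying over a common point of $\mathcal{A}_{J,\tau}$: indeed $J_{\tau}\cap\breve{K}_{J}$ acts on $Y(J,w)$ by left translation, and this action factors through the finite reductive quotient of $\breve{K}_{J}$ and realises the usual (nontrivial) symmetry of a disjoint union of classical Deligne--Lusztig varieties. It is precisely the $0$-dimensionality of $\mathcal{A}_{J,\tau}$, together with the freeness of the $I_{\tau}(\QQ)$-action coming from the neatness of $\mathbf{K}^{p}$, that forces the bundle $p$ to be globally trivial and makes this twist invisible in the end. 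Everything else is routine given what has been cited: the identification of $Y(J,w)$ with the Deligne--Lusztig variety of Theorem~\ref{Main-1}, the verification of the axioms and of Axiom~\ref{uniformize} for $\mathcal{A}_{I}$, the finiteness of $\mathcal{A}_{J,\tau}$ (finiteness of class numbers for the anisotropic-mod-centre inner form $I_{\tau}$), and the bookkeeping of the commuting $J_{\tau}$- and $I_{\tau}(\QQ)$-actions.
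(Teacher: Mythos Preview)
Your proposal is correct and follows essentially the same approach as the paper: the paper's proof is literally the single sentence ``This follows by summarizing the above discussions,'' meaning it simply combines the uniformization $I_{\tau}(\QQ)\backslash X_{I,w}(\tau)\times G(\mathbb{A}^{p}_{f})/K^{p}\cong\mathcal{A}_{I,w}$, the decomposition $X_{I,w}(\tau)=\bigsqcup_{i}iY(J,w)$, and Lemma~\ref{min}. You have supplied the details the paper leaves implicit, in particular the freeness of the $I_{\tau}(\QQ)$-action (from neatness of $\mathbf{K}^{p}$) needed to pass from the double quotient to the product $\mathcal{A}_{J,\tau}\times Y(J,w)$.
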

\begin{proof}
This follows by summarizing the above discussions.
\end{proof}

Comparing this theorem with \cite[Corollary 6.5]{GY10} shows that the description of the basic KR stratum in Theorem \ref{Main-1} agrees with the description by G\"{o}rtz and Yu in \cite[Corollary 6.5]{GY10}. Note that contravariant  Dieudonn\'{e} module is used in \cite{GY10} and we are using covariant  Dieudonn\'{e} module here. This explains why we have $Y_{J}(w)$ instead of $X(w^{-1})$ in \cite[Corollary 6.5]{GY10}. 

\subsection{Basic EO stratum} Consider the isomorphism class of the $p$-torsion subgroup $A[p]$
together with its additional structures coming from a point $(A, \eta)$ on $\mathcal{A}_{\{0\}}=\mathcal{A}_{g}$.
After passing to its Dieudonn\'{e} module, we have a map
\begin{equation}\label{nuK}
\nu_{K}: \mathcal{A}_{g}\rightarrow G(\breve{\QQ}_{p})/\breve{K}_{\sigma}(\breve{K}_{1}\times \breve{K}_{1}).
\end{equation}
Let $^{g}W$ be the minimal length representatives of $S_{g}\backslash W_{g}$. As explained in Example \ref{Siegel}, the image of the map \eqref{nuK} can be identified with $^{g}W\subset W_{g}$. Let $c\leq g$, the finite Weyl group $W_{c}$ of $\Sp_{2c}$ can be viewed as a subgroup of $W_{g}$ naturally and we define ${^{c}W}$ exactly as in the case for ${^{g}W}$. Explicitly, the reflection $s_{c+1-i}$ in $W_{c}$ will be mapped to $s_{g+1-i}$ as a reflection in $W_{g}$. 
We have the following criterion for identifying the basic EO stratum. Let $K=\tilde{\mathbb{S}}-\{s_{0}\}$ which corresponds to the hyperspecial subgroup of $G(\QQ_{p})$. 

\begin{proposition}
Let $w\in ^{g}W$. Then the EO stratum $\mathcal{A}_{g, w}$ is basic if and only if $w\in {^{c}W}$ for some $c\leq g/2$. 
\end{proposition}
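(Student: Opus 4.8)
The proposition is the hyperspecial-level analogue of Proposition \ref{basic-fin}, so the plan is to reduce it to the combinatorial criterion of Proposition \ref{basic-EKOR}, namely that $\mathcal{A}_{g,w}\subset S_{K,[\tau]}$ if and only if $W_{\supp_{\sigma}(w)}$ is finite, where now $K=\tilde{\mathbb{S}}-\{s_{0}\}$ and the operator governing $\supp_{\sigma}$ is $\tau\sigma$. Since we are in the hyperspecial case, the EKOR stratum indexed by $w\in{^{g}W}={^{K}\tilde{W}}\cap\Adm(\{\mu\})$ is the EO stratum $\mathcal{A}_{g,w}$, so there is nothing to translate on that side. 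The task is therefore purely one of identifying, among $w\in{^{g}W}$, exactly those for which $W_{\supp_{\sigma}(w)}$ is a finite Coxeter group, and showing this set coincides with $\bigsqcup_{c\le g/2}{^{c}W}$ under the stated embedding $W_{c}\hookrightarrow W_{g}$ sending $s_{c+1-i}\mapsto s_{g+1-i}$.

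\textbf{Key steps.} First I would record how $\tau\sigma$ acts on the index set. As noted in \S5.2 for the Iwahori case, $\tau\sigma$ exchanges the node $i$ with $g-i$ on the local Dynkin diagram of type $\tilde{C}_{g}$; in the hyperspecial case the relevant simple reflections are $s_{1},\dots,s_{g}$ (the node $s_0$ is removed from $K$ but still governs the combinatorics of $\tilde{\mathbb{S}}$), and a reduced word for $w\in{^{g}W}$ together with its $\tau\sigma$-translates determines $\supp_{\sigma}(w)\subset\tilde{\mathbb{S}}$. Second, I would invoke the precise shape of elements of ${^{g}W}$: these are exactly the $w\le t^{w_0(\mu)}$ of minimal length in $S_g\backslash W_g$, and their supports sit inside an explicit ``staircase'' region. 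Third — and this is the crux — I would argue that $W_{\supp_{\sigma}(w)}$ is finite precisely when some pair of conjugate nodes $\{c,g-c\}$ (equivalently, by the $\tau\sigma$-symmetry, some single node $s_c$ with $c\le g/2$) is \emph{omitted} from $\supp_{\sigma}(w)$, exactly as in the proof of Proposition \ref{basic-fin}: finiteness of a parabolic of an affine Weyl group of type $\tilde{C}_g$ is equivalent to its support not being all of $\tilde{\mathbb{S}}$ and in fact, after removing a $\tau\sigma$-orbit, being decomposed into finite-type pieces. Concretely, $W_{\supp_{\sigma}(w)}$ finite $\iff$ there is $s_c\notin\supp_{\sigma}(w)$ with $c\le g/2$ $\iff$ $w$ is supported (as an element of ${^{g}W}$) on the subdiagram $\{s_{g-c+1},\dots,s_g\}$ $\iff$ $w$ lies in the image of ${^{c}W}\hookrightarrow{^{g}W}$. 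The last equivalence is just unwinding the embedding $s_{c+1-i}\mapsto s_{g+1-i}$ and checking it matches minimal-length-representative sets on both sides (the parabolic $S_g$ restricts to $S_c$ correctly).

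\textbf{Main obstacle.} The genuinely delicate point is the middle equivalence: translating ``$W_{\supp_{\sigma}(w)}$ finite'' into ``$\supp_\sigma(w)$ misses a $\tau\sigma$-orbit $\{c,g-c\}$'' for a specified $w\in{^{g}W}$, because unlike the Iwahori case in Proposition \ref{basic-fin} (where $w$ ranges over all of $\Adm(\{\mu\})$ and $\supp$ is computed in $W_a\tau$ directly), here $w$ is already constrained to ${^{g}W}$ and one must be careful that $\supp_\sigma$ computed via the $\tau\sigma$-action really does avoid the node $s_c$ — in particular that omitting $s_c$ from $\supp(w)$ forces, via minimality in $S_g\backslash W_g$ and the $\tau\sigma$-symmetry, that $\supp_{\sigma}(w)\subset\{s_{g-c+1},\dots,s_g\}$ rather than spilling onto lower-indexed nodes. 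I expect this to follow by the same argument as Proposition \ref{basic-fin} (if $s_c\notin\supp_{\sigma}(w)$ then $s_{g-c}\notin\supp(w\tau^{-1})$, forcing $w$ into $W_{\{c,g-c\}}\tau$, and then intersecting with ${^{g}W}$ and $K$), together with the observation that $\Adm(\{\mu\})\cap W_{\{c,g-c\}}\tau\cap{^{K}\tilde{W}}$ is carried isomorphically onto ${^{c}W}$ by the diagram embedding; but making the book-keeping of the two cases $c<g/2$ and $c=g/2$ (a single fixed node versus a genuine $2$-element orbit) uniform is where the care is needed.
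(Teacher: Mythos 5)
Your proposal follows essentially the same route as the paper: reduce via Propositions \ref{basic-EKOR} and \ref{basic-fin} to the purely combinatorial statement that an element $w\in{^{g}W}$ lies in $W_{\{c,g-c\}}\tau$ if and only if it lies in the image of ${^{c}W}$, which the paper dispatches by citing the identity ${^{c}W}=W_{\{c,g-c\}}\cap{^{g}W}$ from \cite[Lemma 3.8]{GH12}, while you propose to verify it by hand using the structure of reduced expressions in ${^{g}W}$ (exactly the input the paper itself uses later, via \cite[Lemma 7.1]{Hoe10}, in Lemma \ref{5.5}). One small slip to correct: in your chain of equivalences it should read $\supp(w)\subset\{s_{g-c+1},\dots,s_g\}$ rather than $\supp_{\sigma}(w)\subset\{s_{g-c+1},\dots,s_g\}$, since $\supp_{\sigma}(w)$ necessarily also contains the $\tau\sigma$-translate $\{s_{0},\dots,s_{c-1}\}$.
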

\begin{proof}
By Proposition \ref{basic-fin}, $EO_{K,w}$ is basic if and only if $w\in W_{\{c, g-c\}}\tau$ considered as an element in $^{K}\tilde{W}$. Since 
\begin{equation*}
^{c}W=W_{\{c, g-c\}}\cap {^{g}W}
\end{equation*} 
by \cite[Lemma 3.8]{GH12}, $EO_{K, w}$ is basic if and only if $w\in {^{c}W}$ as an element in ${^{g}W}$.
\end{proof}

This proposition rediscovers \cite[Remark 2.5.7]{Har10} and we now explain how to describe those basic EO strata as disjoint unions of Deligne-Lusztig varieties. Again Corollary \ref{EKOR-DL} takes the following form 
\begin{equation*}
I_{\tau}(\QQ)\backslash X_{K, w}(\tau)\times G(\mathbb{A}^{p}_{f})/K^{p}\xrightarrow{\cong} \mathcal{A}_{g, w}
\end{equation*}
and  by the proof of Theorem \ref{Main-1}
\begin{equation*}
X_{K,w}(\tau)=\bigsqcup_{i\in J_{\tau}/J_{\tau}\cap \breve{K}_{\supp_{\sigma}(w)\cup I(K, w, \sigma)}}iY_{I(K, w, \sigma)}(w)
\end{equation*}
where 
\begin{equation*}
\begin{split}
Y_{I(K, w, \sigma)}(w)&=\{g\breve{K}_{I(K, w, \sigma)}\in \breve{K}_{\supp_{\sigma}(w)\cup I(K, w, \sigma)}/\breve{K}_{I(K, w, \sigma)}: g^{-1}\tau\sigma(g)\in \breve{K}_{I(K, w, \sigma)}w\breve{K}_{\sigma(I(K, w, \sigma))}\}\\
&=\{g \breve{K}_{\supp_{\sigma}(w)\cap K} \in \breve{K}_{\supp_{\sigma}(w)}/\breve{K}_{\supp_{\sigma}(w)\cap K}: g^{-1}\tau\sigma(g)\in {\breve{K}_{\supp_{\sigma}(w)\cap K \sigma}}\breve{I}w\breve{I} \}.\\
\end{split}
\end{equation*}

\begin{lemma}\label{5.5}
Suppose that $w\in {^{c}W}$ and $w\not\in {^{c-1}W}$, then 
\begin{equation*}
\supp_{\sigma}(w)=\{s_{0}, \cdots, s_{c-1}\}\cup \{s_{g-c+1}, \cdots, s_{g}\}.
\end{equation*}
\end{lemma}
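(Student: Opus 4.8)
The plan is to unwind the definitions of the combinatorial objects in play. Recall $^{c}W$ denotes the minimal length representatives of $W_{\mathrm{Sp}_{2c}}\backslash W_{c}$ sitting inside $W_{g}$, with the reflection $s_{c+1-i}$ of $W_{c}$ identified with $s_{g+1-i}$ of $W_{g}$; so $W_{c}$ is the parabolic on $\{s_{g-c+1},\dots,s_{g}\}$ and the ``$S_{c}$'' we quotient out is generated by $\{s_{g-c+1},\dots,s_{g-1}\}$. The hypothesis $w\in {^{c}W}$ but $w\notin{^{c-1}W}$ therefore says precisely that $w$, written as a minimal length coset representative, genuinely involves the generator $s_{g-c+1}$ (otherwise its support would sit inside the $W_{c-1}$-block $\{s_{g-c+2},\dots,s_{g}\}$ and $w$ would already lie in ${^{c-1}W}$). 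First I would record this: $\supp(w)\subset\{s_{g-c+1},\dots,s_{g}\}$, and $s_{g-c+1}\in\supp(w)$. Then I would invoke the known shape of minimal representatives in ${^{c}W}$ — by \cite[Lemma 3.8]{GH12} (already used in the preceding proposition) together with the explicit description of $^{g}W$ from Example \ref{Siegel}, a reduced word for such a $w$ is of the form $s_{g}s_{g-1}\cdots s_{g-c+1}$ up to the coset, so that in fact $\supp(w)=\{s_{g-c+1},\dots,s_{g}\}$ exactly once $w\notin{^{c-1}W}$. This identifies $\supp(w)$.

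Next I would apply the operator $\tau\sigma$, whose action on the type $\widetilde{C}_{g}$ diagram was recorded just above Proposition \ref{basic-fin}: it sends the node $i$ to the node $g-i$, i.e. $s_{i}\mapsto s_{g-i}$. Hence $(\tau\sigma)(\supp(w))=\{s_{0},s_{1},\dots,s_{c-1}\}$, which is the second block in the claimed formula (the image of $\{s_{g-c+1},\dots,s_{g}\}$ under $i\mapsto g-i$). Since $(\tau\sigma)^2$ fixes the diagram pointwise, the only sets arising as $(\tau\sigma)^{n}(\supp(w))$ for $n\in\ZZ$ are $\supp(w)$ itself and its image; therefore
\begin{equation*}
\supp_{\sigma}(w)=\bigcup_{n\in\ZZ}(\tau\sigma)^{n}(\supp(w))=\{s_{0},\dots,s_{c-1}\}\cup\{s_{g-c+1},\dots,s_{g}\},
\end{equation*}
which is the assertion. (Note the hypothesis $c\le g/2$ guarantees these two blocks are disjoint, consistent with $W_{\supp_{\sigma}(w)}$ being finite.)

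The one point requiring genuine care — and the step I expect to be the main obstacle — is the precise claim that $\supp(w)$ equals the full block $\{s_{g-c+1},\dots,s_{g}\}$, rather than merely being contained in it and containing $s_{g-c+1}$. A priori a coset representative in $^{c}W$ not lying in $^{c-1}W$ could have a support that skips some generator strictly between $s_{g-c+1}$ and $s_{g}$. To handle this I would use connectedness: the support of any element of a Coxeter group is not arbitrary, but for the specific combinatorics of $^{\mu}W$ for $\mu=(1^{(g)},0^{(g)})$ — where the relevant minimal representatives are exactly the ``descending runs'' $s_{g}s_{g-1}\cdots s_{j}$ (cf. the classification of the Ekedahl-Oort index set in Example \ref{Siegel} and the parametrization in \cite{Har10, GH12}) — the condition $w\notin{^{c-1}W}$ forces the run to reach all the way down to $s_{g-c+1}$, and a descending run from $s_g$ to $s_{g-c+1}$ has support equal to the whole interval. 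If one prefers to avoid the explicit classification, an alternative is: $\supp(w)$ is an interval in the $\widetilde C_g$ diagram (being the support of a single minimal coset representative here), it is contained in $\{s_{g-c+1},\dots,s_g\}$, and it contains $s_{g-c+1}$ — and the only interval with those two properties whose complement in $\{s_{g-c+1},\dots,s_g\}$ does not already realize $w$ as an element of $^{c-1}W$ is the whole block. Either route closes the argument; the rest is the diagram-automorphism bookkeeping above.
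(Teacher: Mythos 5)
Your main line of argument is essentially the paper's. The decisive input in both is the classification from \cite[Lemma~7.1]{Hoe10} (equivalently \cite[Lemma~3.8]{GH12}): the support of $w\in{^{c}W}$, as a minimal coset representative, is an interval $\{s_i,\dots,s_g\}$ with $i>g-c$; then $w\notin{^{c-1}W}$ forces $i=g-c+1$, and the diagram involution $\tau\sigma\colon s_i\mapsto s_{g-i}$ produces the stated $\supp_\sigma(w)$. You cite the same lemma and carry out the same $\tau\sigma$-orbit bookkeeping, so the substance matches. The one thing to be careful about is the ``alternative'' route you sketch at the end: the claim that $\supp(w)$ is automatically an interval ``being the support of a single minimal coset representative'' is not a general Coxeter-theoretic fact (in other parabolic quotients minimal representatives can perfectly well have disconnected support), and in the present setting the interval shape is precisely what \cite[Lemma~7.1]{Hoe10} and \cite[Lemma~3.8]{GH12} establish. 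So that ``alternative'' repackages the citation rather than avoiding it; your first route is the correct one and is the one the paper uses.
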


\begin{proof}
By the proof of \cite[Lemma 7.1]{Hoe10}, see also \cite[Lemma 3.8]{GH12}, the set of simple reflections in a reduced expression of $w$ is equal to $\{s_{i}, s_{i+1}, \cdots, s_{g-1}, s_{g}\}$ with $i>g-c$. Since $w\not\in {^{c-1}W}$, $i=g-c+1$. Then by definition we have 
\begin{equation*}
\supp_{\sigma}(w)=\{s_{0}, \cdots, s_{c-1}\}\cup \{s_{g-c+1}, \cdots, s_{g}\}.
\end{equation*}
\end{proof}

Note the proof of the above lemma shows that those $w\in {^{c}W}$ and $w\not\in {^{c-1}W}$ are in fact $\sigma$-Coxeter elements in $W_{\supp_{\sigma}(w)}$. Therefore we obtain the following lemma.

\begin{lemma}\label{5.6}
The set $I(K, w, \sigma)$ is equal to $\{s_{c+1}, \cdots, s_{g-c-1}\}$.
\end{lemma}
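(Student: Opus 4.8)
\textbf{Proof plan for Lemma \ref{5.6}.}

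The plan is to compute $I(K,w,\sigma) = \mathrm{max}\{K' \subset K : \mathrm{Ad}(w)\sigma(K') \subset K'\}$ directly using the explicit description of $\mathrm{supp}_\sigma(w)$ obtained in Lemma \ref{5.5}. Recall $K = \tilde{\mathbb{S}} - \{s_0\} = \{s_1, \dots, s_g\}$, and that by Lemma \ref{5.5} we have $\mathrm{supp}_\sigma(w) = \{s_0, \dots, s_{c-1}\} \cup \{s_{g-c+1}, \dots, s_g\}$, so within $K$ the reflections \emph{disjoint} from $\mathrm{supp}_\sigma(w)$ are precisely $\{s_{c+1}, \dots, s_{g-c-1}\}$ (note $s_c \in K$ but $s_c \notin \mathrm{supp}_\sigma(w)$, while $s_{c-1} \in \mathrm{supp}_\sigma(w)$; so the claimed $I(K,w,\sigma)$ is the set of $s_i \in K$ with $c+1 \le i \le g-c-1$). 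First I would observe that $w \in W_{\mathrm{supp}_\sigma(w)}$ and hence, for any simple reflection $s_k$ commuting with every generator of $W_{\mathrm{supp}_\sigma(w)}$ and fixed by $\sigma$, one has $\mathrm{Ad}(w)\sigma(s_k) = s_k$; I would check that every $s_k$ with $c+1 \le k \le g-c-1$ has this property, using the $\tilde C_g$ Dynkin diagram (the only neighbors of $s_k$ in that range are $s_{k\pm1}$, which lie in the same range or are $s_c, s_{g-c}$, all of which are outside $\mathrm{supp}_\sigma(w)$; and $\sigma = \tau\sigma$ acts by $s_i \mapsto s_{g-i}$, which preserves $\{s_{c+1}, \dots, s_{g-c-1}\}$). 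This shows $\{s_{c+1}, \dots, s_{g-c-1}\} \subset I(K,w,\sigma)$.

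For the reverse inclusion, I would argue that $I(K,w,\sigma)$ cannot contain any $s_k$ with $1 \le k \le c$ or $g-c \le k \le g$. The key point is stability under $\mathrm{Ad}(w)\sigma$: if $s_k \in K'$ with $K'$ stable, then applying $(\tau\sigma)$-conjugation and $\mathrm{Ad}(w)$ repeatedly must keep us inside $K'$, but these operations force us to pick up generators in $\mathrm{supp}_\sigma(w) \subset K' $ — for instance $s_c \notin K$ acts as a bridge, and iterating $\mathrm{Ad}(w)\sigma$ starting from such an $s_k$ eventually produces, via the non-commutativity with elements of $\mathrm{supp}_\sigma(w)$, a reflection not in $K$ (such as $s_0$) or drives $K'$ to meet the boundary; I would make this precise by noting that any $\mathrm{Ad}(w)\sigma$-stable subset of $K$ meeting $\{s_1, \dots, s_c\}$ must, because $w$ is a $\sigma$-Coxeter element of $W_{\mathrm{supp}_\sigma(w)}$ whose support includes $s_{c-1}$ adjacent to $s_c$, fail the stability condition. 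A cleaner route: one can invoke that $\mathrm{supp}_\sigma(w) \cap I(K,w,\sigma) = \emptyset$ is automatic (an element of $\mathrm{supp}_\sigma(w)$ cannot lie in an $\mathrm{Ad}(w)\sigma$-stable subset avoiding $s_0$, since $\mathrm{Ad}(w)\sigma$ permutes $\mathrm{supp}_\sigma(w) \cup \{s_0, s_g\}$ transitively enough to escape $K$), together with the fact that $s_c, s_{g-c} \in K \setminus \mathrm{supp}_\sigma(w)$ are each adjacent in the diagram to an element of $\mathrm{supp}_\sigma(w)$ outside $K$ or inside $\mathrm{supp}_\sigma(w)$, so they cannot belong to a stable set either. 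This pins down $I(K,w,\sigma) = \{s_{c+1}, \dots, s_{g-c-1}\}$ exactly.

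The main obstacle I anticipate is making the reverse inclusion genuinely rigorous rather than diagram-hand-waving: one must carefully track how $\mathrm{Ad}(w)\sigma$ acts on simple reflections when $w$ is a nontrivial Coxeter-type element (conjugation by $w$ does not send simple reflections to simple reflections in general), so the cleanest argument will use that $I(K,w,\sigma)$ is characterized \emph{intrinsically} — it is the largest $K' \subset K$ on which $w\sigma$ normalizes $W_{K'}$ — and then verify (a) $W_{\{s_{c+1},\dots,s_{g-c-1}\}}$ is normalized by $w\sigma$ because $w \in W_{\mathrm{supp}_\sigma(w)}$ centralizes it elementwise and $\tau\sigma$ preserves it, and (b) adjoining any further generator $s_k \in K$ breaks normalization because $s_k$ would be adjacent to $\mathrm{supp}_\sigma(w)$, so $w s_k w^{-1}$ picks up a factor outside $W_{K'}$. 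I would also double-check the boundary/parity cases $c = \lfloor g/2 \rfloor$ and small $c$, where the interval $\{s_{c+1}, \dots, s_{g-c-1}\}$ may be empty, confirming the formula still reads correctly (empty set) in those degenerate situations.
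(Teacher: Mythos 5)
Your plan is sound in outline, but the paper's actual proof is much shorter than your sketch, and the reverse inclusion in your sketch has a real gap that you yourself flag.

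The paper proves Lemma~\ref{5.6} in one line by citing \cite[Lemma~4.6.1]{GH15}: having just observed, in the paragraph preceding the lemma, that $w$ is a $\sigma$-Coxeter element of the finite Weyl group $W_{\supp_\sigma(w)}$, it invokes G\"ortz--He's result which says that for such $w$ the set $I(K,w,\sigma)$ is exactly the set of simple reflections commuting with $\supp_\sigma(w)$, and then reads off the interval $\{s_{c+1},\dots,s_{g-c-1}\}$ from the $\tilde C_g$ diagram. Your forward inclusion $\{s_{c+1},\dots,s_{g-c-1}\}\subset I(K,w,\sigma)$ is essentially right: writing the element of ${}^K\tilde W$ as $w'\tau$ with $w'\in W_a$ supported in $\{s_{g-c+1},\dots,s_g\}$, any $s_k$ in the interval commutes with $w'$, so $\mathrm{Ad}(w'\tau)\sigma(s_k)=w'\,\tau\sigma(s_k)\,\tau^{-1}w'^{-1}=s_{g-k}$, which again lies in the interval. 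Two small imprecisions to fix: you write ``$w\in W_{\supp_\sigma(w)}$'' when only the $W_a$-component lies there, and you first claim $\mathrm{Ad}(w)\sigma(s_k)=s_k$ but then (correctly) note that $\tau\sigma$ acts by $s_i\mapsto s_{g-i}$, so the interval is preserved only setwise; these two sentences as written contradict each other.

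The genuine gap is the reverse inclusion. You say that adjoining $s_c$ (or $s_{g-c}$, or a reflection from $\{s_1,\dots,s_{c-1}\}\cup\{s_{g-c+1},\dots,s_g\}$) must break $\mathrm{Ad}(w)\sigma$-stability ``because of adjacency,'' but adjacency of $s$ to $\supp_\sigma(w)$ does not by itself imply that $w s w^{-1}$ fails to be a simple reflection --- for a Coxeter element this can and does happen (e.g.\ $w=s_1s_2$ in type $A_2$ sends $s_1\mapsto s_2$). One has to argue that the full $\mathrm{Ad}(w)\sigma$-orbit of such an $s$ eventually either leaves $K$ (hits $s_0$) or leaves the set of simple reflections entirely, and this is precisely where the $\sigma$-Coxeter hypothesis does nontrivial work. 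You acknowledge this obstacle, and your proposed ``cleaner route'' via normalization of $W_{K'}$ is heading in the right direction, but it stops at the same place. So either carry that orbit analysis through carefully, or --- as the paper does --- simply cite \cite[Lemma~4.6.1]{GH15}, which encapsulates exactly the reverse inclusion once the $\sigma$-Coxeter property of $w$ (established in the preceding remark) is in hand.
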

\begin{proof}
By \cite[Lemma 4.6.1]{GH15}, $I(K, w, \sigma)$ is exactly the set of simple reflections that commute with \begin{equation*}
\supp_{\sigma}(w)=\{s_{0}, \cdots, s_{c-1}\}\cup \{s_{g-c+1}, \cdots, s_{g}\}.
\end{equation*}
The result is then clear.
\end{proof}

For $I(K, w, \sigma)=\{s_{c+1}, \cdots, s_{g-c-1}\}$, we denote by $Y_{c}(w)$ the fine Deligne-Lusztig variety given by
\begin{equation*}
Y_{I(K, w, \sigma)}(w)=\{g \breve{K}_{\supp_{\sigma}(w)\cap K} \in \breve{K}_{\supp_{\sigma}(w)}/\breve{K}_{\supp_{\sigma}(w)\cap K}: g^{-1}\tau\sigma(g)\in {\breve{K}_{\supp_{\sigma}(w)\cap K \sigma}}\breve{I}w\breve{I} \}.
\end{equation*}
Since $K=\tilde{S}-\{s_{0}\}$, $K\cap \supp_{\sigma}(w)=\{s_{1}, \cdots, s_{c-1}\}\cup\{s_{g-c+1}, \cdots, s_{g}\}$. It follows that $Y_{c}(w)$ is exactly the fine Deligne-variety denoted by $X_{c(w)}\{\tau(w)\}$ in \cite[Theorem 1.2]{Hoe10}.

\begin{theorem}\label{Ho-comp}
For  $w\in {^{c}W}$ and $w\not\in {^{c-1}W}$, the basic EO stratum $\mathcal{A}_{g, w}$ is isomorphic to $\mathcal{A}_{\{c, g-c\}, \tau}\times Y_{c}(w)$
\end{theorem}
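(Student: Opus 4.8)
The plan is to combine the two uniformizations coming from Corollary \ref{EKOR-DL}, one for the EKOR stratum $\mathcal{A}_{g,w}$ on $\mathcal{A}_{\{0\}}$ and one for the minimal EKOR stratum $\mathcal{A}_{\{c,g-c\},\tau}$ on $\mathcal{A}_{\{c,g-c\}}$, together with the factorization of the fine affine Deligne-Lusztig variety $X_{K,w}(\tau)$ that is already displayed just before Lemma \ref{5.5}. Concretely, I would start from
\[
\mathcal{A}_{g,w}\ \cong\ I_{\tau}(\QQ)\backslash X_{K,w}(\tau)\times G(\mathbb{A}^{p}_{f})/K^{p}
\]
with $K=\tilde{\mathbb{S}}-\{s_{0}\}$, and substitute the disjoint-union description
\[
X_{K,w}(\tau)=\bigsqcup_{i\in J_{\tau}/J_{\tau}\cap \breve{K}_{\supp_{\sigma}(w)\cup I(K,w,\sigma)}} iY_{c}(w),
\]
where by Lemma \ref{5.6} we have $I(K,w,\sigma)=\{s_{c+1},\dots,s_{g-c-1}\}$ and hence $\supp_{\sigma}(w)\cup I(K,w,\sigma)=\tilde{\mathbb{S}}-\{s_{c},s_{g-c}\}$. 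The key observation is that this last set is precisely the type of the parahoric $\breve{K}_{\{c,g-c\}}$ attached to the level $\{c,g-c\}\subset I$, so the index set $J_{\tau}/J_{\tau}\cap \breve{K}_{\supp_{\sigma}(w)\cup I(K,w,\sigma)}$ of the disjoint union is exactly the index set appearing in Lemma \ref{min} for the minimal stratum $\mathcal{A}_{\{c,g-c\},\tau}$.

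Next I would feed this back into the quotient by $I_{\tau}(\QQ)$. Because the $G(\mathbb{A}^{p}_{f})/K^{p}$-factor is untouched and $I_{\tau}(\QQ)$ acts on $X_{K,w}(\tau)$ through $J_{\tau}(\QQ_{p})$ permuting the pieces $iY_{c}(w)$ compatibly with its action on $J_{\tau}/J_{\tau}\cap \breve{K}_{\{c,g-c\}}$, the double quotient splits as a product: the orbits of $I_{\tau}(\QQ)$ on the set of components are indexed by $I_{\tau}(\QQ)\backslash (J_{\tau}/J_{\tau}\cap\breve{K}_{\{c,g-c\}})\times G(\mathbb{A}^{p}_{f})/K^{p}$, which is $\mathcal{A}_{\{c,g-c\},\tau}$ by Lemma \ref{min}, while the stabilizer of each component acts trivially on the Deligne-Lusztig variety $Y_{c}(w)$ itself (here one uses that $Y_{c}(w)$ is a fixed classical Deligne-Lusztig variety, independent of the component, and that the $I_{\tau}(\QQ)$-action on it through the stabilizer is by an element of the relevant finite parahoric quotient, which acts trivially up to the isomorphisms identifying the pieces). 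This yields
\[
\mathcal{A}_{g,w}\ \cong\ \Big(I_{\tau}(\QQ)\backslash (J_{\tau}/J_{\tau}\cap\breve{K}_{\{c,g-c\}})\times G(\mathbb{A}^{p}_{f})/K^{p}\Big)\times Y_{c}(w)\ \cong\ \mathcal{A}_{\{c,g-c\},\tau}\times Y_{c}(w).
\]
Finally I would remark, exactly as in the discussion following Theorem \ref{GY-comp}, that the identification of $Y_{c}(w)$ with Hoeve's $X_{c(w)}\{\tau(w)\}$ has already been carried out just above, so the statement matches \cite[Theorem 1.2]{Hoe10}.

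The main obstacle I anticipate is making the product decomposition of the quotient genuinely rigorous rather than formal: one must check that the $I_{\tau}(\QQ)$-action, transported through $J_{\tau}(\QQ_{p})$, really does respect the bijection between components of $X_{K,w}(\tau)$ and cosets in $J_{\tau}/J_{\tau}\cap\breve{K}_{\{c,g-c\}}$, and that the induced action of the stabilizer of a component on $Y_{c}(w)$ is trivial — equivalently, that $Y_{c}(w)$ is acted on only through the finite group $\breve{K}_{\supp_{\sigma}(w)\cup I(K,w,\sigma)}/\breve{K}_{I(K,w,\sigma)}$ in a way that is absorbed by the choice of coset representatives $i$. This is essentially the same bookkeeping that underlies Theorem \ref{GY-comp} in the Iwahori case, so I would structure the argument to parallel that proof, invoking Proposition \ref{affine-classical} and the explicit form of $I(K,w,\sigma)$ from Lemma \ref{5.6}, and reduce the $\{c,g-c\}$-level statement to the already-established Iwahori-level statement via the finite étale map $\mathcal{A}_{I,\tau}\to\mathcal{A}_{\{c,g-c\},\tau}$ used in Lemma \ref{min}.
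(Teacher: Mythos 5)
Your proposal is essentially the same argument as the paper's: start from the uniformization of $\mathcal{A}_{g,w}$ via Corollary \ref{EKOR-DL}, substitute the disjoint-union decomposition of $X_{K,w}(\tau)$ into copies of $Y_c(w)$, use Lemmas \ref{5.5}--\ref{5.6} to recognize $\supp_{\sigma}(w)\cup I(K,w,\sigma)=\tilde{\mathbb{S}}-\{c,g-c\}$, and invoke Lemma \ref{min} to identify the index set with $\mathcal{A}_{\{c,g-c\},\tau}$. The only difference is that you spell out the product-decomposition-of-the-quotient step which the paper asserts more tersely, which is a helpful clarification rather than a divergence.
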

\begin{proof}
We know
\begin{equation*}
I_{\tau}(\QQ)\backslash X_{K, w}(\tau)\times G(\mathbb{A}^{p}_{f})/K^{p}\xrightarrow{\cong} \mathcal{A}_{g, w}
\end{equation*}
and 
\begin{equation*}
X_{K,w}(\tau)=\bigsqcup_{i\in J_{\tau}/J_{\tau}\cap \breve{K}_{\supp_{\sigma}(w)\cup I(K, w, \sigma)}}iY_{c}(w).
\end{equation*}
Notice that $\supp_{\sigma}(w)\cup I(K, w, \sigma)=\tilde{\mathbb{S}}-\{c, g-c\}$ by Lemma \ref{5.5} and Lemma \ref{5.6}. The result follows from the fact that 
\begin{equation*}
I_{\tau}(\QQ)\backslash J_{\tau}/J_{\tau}\cap \breve{K}_{\tilde{\mathbb{S}}-\{c, g-c\}}\times G(\mathbb{A}^{p}_{f})/K^{p}.
\end{equation*}
is isomorphic to $\mathcal{A}_{\{c, g-c\}, \tau}$ as shown in Lemma \ref{min}.
\end{proof}

This shows that the descriptions of G\"{o}rtz-He-Nie in Theorem \ref{Main-1} for the basic EO stratum agrees with 
the description of Hoeve as in \cite[Theorem 1.2]{Hoe10}. Moreover let $J=\tilde{\mathbb{S}}-\{c, g-c\}$ in Theorem \ref{GY-comp} and let $w\in {^{c}W}$ as in Theorem \ref{Ho-comp}. Then we easily  obtain the following commutative diagram combining  Theorem \ref{GY-comp} and Theorem \ref{Ho-comp} 
\begin{equation*}
\begin{tikzcd}
 \mathcal{A}_{I, w} \arrow[r,  "\sim"] \arrow[d]& \mathcal{A}_{\{c, g-c\}, \tau}\times Y(J, w)  \arrow[d]\\
   \mathcal{A}_{g, w} \arrow[r,  "\sim"]  & \mathcal{A}_{\{c, g-c\}, \tau}\times Y_{c}(w)\\
\end{tikzcd}
\end{equation*}
with the vertical arrows the natural projection maps. This recovers one of the main theorems of \cite[Theorem 1.1(3)]{GH12}.

 \end{document}